\theoremstyle{plain}
\newtheorem{thm}{Theorem}
\theoremstyle{definition}
\newtheorem{deff}{Definition}
\theoremstyle{remark}
\newtheorem{rmk}{Remark} 
\numberwithin{equation}{section}
\begin{document}                                                 
    \title[Homogenization and continuum percolation]{Homogenization and continuum percolation}                                 
    \author[Dimitris Kontogiannis]{Dimitris Kontogiannis}                                
    \address{Iowa State University}                                    
    \email{dkontog@iastate.edu}                                      
    \date{}                                       
    \keywords{Homogenization, random media, continuum percolation}                                   
    \subjclass[2000]{Primary: 35; Secondary:76M50}                                  
    \begin{abstract}We propose continuum percolation theory to study homogenization problems of elliptic equations. In particular, let $g:\mathbb{R}\rightarrow [0,1]$ be a connectivity function that connects two points $x_{1}, x_{2}\in\mathbb{R}^{n}$ with probability $g(|x_{1}-x_{2}|)$, where $|\cdot|$ denotes the Euclidean distance. Such functions have been introduced in continuum percolation theory. We involve the connectivity function to study homogenization problems in random media. Our aim is to improve and extend similar results that have been obtained for periodic domains.  
\end{abstract}                
    \maketitle  
\section{Introduction}
Homogenization theory is related to the asymptotic behavior of partial differential equations describing physical phenomena in heterogeneous materials. In particular, we look for the effective (homogenized) equations that describe the characteristics of the inhomogeneous medium as the length of a small parameter $\varepsilon$ tends to zero. This small parameter is the length of the heterogeneity.

Some methods to approach such problems are, among others, the two scale expansion, the two scale convergence, the $\Gamma-$ convergence. A summary of homogenization techniques can be found in \cite{Horn97}. Relevant problems have been studied in \cite{Allai90}, \cite{Allai92},
\cite{Cior82}, \cite{Cior99}, \cite{BoSu07}.

In this note, we obtain averaged equations over randomly perforated domains. For this purpose we use the connectivity function as defined in percolation theory \cite{Mees96}. More specifically, we consider the boundary value problem

\begin{equation}
\begin{array}{l}
	\Delta u^{\varepsilon}-\lambda u^{\varepsilon}=f , x\in G^{\varepsilon}(\omega)\cap D \\
  \displaystyle u^{\varepsilon}=0 , x\in\partial G^{\varepsilon}(\omega) \\
\end{array}
\label{eq:1}
\end{equation}
where the domain $G^{\varepsilon}(\omega)$ is a randomly perforated domain. We show that as the parameter $\varepsilon$ tends to zero, $u^{\varepsilon}$ converges to the solution of the problem
\begin{equation}
\begin{array}{l}
	\Delta u-(\lambda+c) u=f , x\in  D \\
  \displaystyle u=0 , x\in\partial D \\
\end{array}
\label{eq:1}
\end{equation}
The parameter $c$ that appears in $(1.2)$ is the extra term for the capacity which has been previously obtained for domains of periodic structure \cite{Cior82} as well as domains perforated by balls of random radius centered on the discrete $\mathbb{Z}^{n}$ lattice \cite{Caffa09}. Our approach is to provide a general model of randomly perforated domains so that $(1.2)$ can be obtained without periodicity assumptions. The main feature of our model is the connectivity function which has been introduced in continuum percolation theory\cite{Mees96}. This function connects two points of a point process (Poisson process for instance) with probability which depends on their distance. Thus, the main idea behind our model is 'as the density of points increases, points are connected only if their distance is of order $\varepsilon$'. With this restriction, the model of random structures and shapes is based on the distance of the points. For example, we can think of thin structures such as tubules across the line segments or balls centered on the points with sufficiently small radius. To establish the homogenized equation, we use the ergodic theorem of stochastic integral functionals proved in \cite{DalMod86} together with the ergodic properties of our model. 

We mention that in \cite{Cior82}, the 'strange term' $c$ appears in the equation when the 'holes' have a critical size and examples for periodic domains are included. In some cases, the distribution $c$ is given explicitly and the fundamental solution of the Laplacian on the annulus is an important tool.  Our approach covers a wide range of non periodic perforated domains and critical values on the size of the structures are not under consideration in this work. It would be of interest though to find examples of random domains of this generality for which critical values can be approximated or computed. 

The paper is organized as follows: in section $2$, we give a small review of results in stochastic homogenization, some of which we extend and improve in this note. In section $3$ we include a brief review of discrete and continuum percolation theory and the construction of random models that we will use in homogenization problems. In section $4$, we present our main homogenization results. Our work is an improvement of the homogenization method introduced by E.Khruslov(see for instance \cite{Khrus05}). A key of the method is the definition of 'mesoscopic' characteristics, measureson a scale $h$ with $\varepsilon<< h<<\text{ diam }D$ and has been used in a variety of similar problems.

\section{Ergodic theory and stochastic homogenization}

Stochastic homogenization is based on the most generalized notion of periodicity, the stationarity. If, in addition, we have independence at large distances, we talk about ergodicity. Ergodic theory is related to the study of dynamical systems with an invariant measure. 

Let $(\Omega,F,\mu, T)$ be a measure preserving dynamical system defined on a probability space with the following structure: $F$ is the $\sigma-$algebra on $\Omega$, ${\mu}$ is the probability measure, $T$ is the measure preserving transformation  such that for any $A\in F$, $\mu (T^{-1}(A))=\mu (A)$. The system $(\Omega,F,\mu, T)$ is ergodic if the $\sigma-$algebra of $T-$invariant events is trivial, that is, it occurs with probability zero or one. To see the importance of the theory on averaging problems, we state (among many versions) a subadditive ergodic theorem.

A function $\mu:A\rightarrow\mathbb{R}$ is called subadditive if for every finite and disjoint family $(A_{i})_{i\in I}$ with $\displaystyle|A\setminus\cup_{i\in I}A_{i}|=0$, 

\[\mu(A)\leq\sum_{i}\mu(A_{i})\] 
We say that $\mu$ is dominated if $0\leq\mu(A)\leq C|A|$ for all sets $A$. Consider now the family of dominated, subadditive functions and the group of translations $(\tau_{z}\mu)(A=\mu(\tau_{z}A)$, where $\tau_{z}A=\{x\in \mathbb{R}^{n}:x-z\in A\}$.

\begin{thm}(Ergodic)(see \cite{Akcog81}, \cite{DalMod86}): Let $\mu:\Omega\rightarrow \mathbb{R}^{n}$ be a subadditive process, periodic in law, in the sense that $\mu(\cdot)$ and $\tau_{z}\mu(\cdot)$ have the same distribution for every $z\in\mathbb{Z}^{n}$. Then, there exists measurable function $\phi:\Omega\rightarrow R$ and a subset $\Omega'\subset\Omega$ of full measure such that
\[\lim_{t\rightarrow\infty}\frac{\mu(\omega)(tQ)}{|tQ|}=\phi (\omega)\] exists a.e. $\omega\in\Omega'$ and for every cube $Q\subset \mathbb{R}^{n}$. Furthermore, if $\mu$ is ergodic then $\phi$ is constant. 
\end{thm}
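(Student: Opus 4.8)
The plan is to extract a deterministic candidate limit, bound the $\limsup$ from above by a soft ergodic argument, and then secure the matching lower bound for the $\liminf$, which carries the genuinely subadditive content. Throughout I read ``periodic in law'' as the covariance relation $\mu(\tau_z\omega)(A)=\mu(\omega)(z+A)$ for the measure-preserving $\mathbb{Z}^n$-action $\tau_z$, so that $\mu(\cdot)(A)$ and $\mu(\cdot)(z+A)$ share a law for every $z\in\mathbb{Z}^n$.

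First I would fix the unit cube $Q_0=[0,1)^n$ and set $\gamma(m)=\mathbb{E}[\mu(\cdot)(mQ_0)]$, which is finite by the domination $0\le\mu(A)\le C|A|$. Tiling $mQ_0$ by unit cubes and using subadditivity shows $m\mapsto\gamma(m)$ is subadditive, so Fekete's lemma yields $\phi_0:=\lim_{m\to\infty}\gamma(m)/m^n=\inf_{m}\gamma(m)/m^n$, finite. Sandwiching a real $t\in[N,N+1)$ between $NQ_0$ and $(N+1)Q_0$ and absorbing the thin boundary shell by domination reduces the continuous limit to integer dilations, and a further tiling comparison between two cubes shows the eventual value cannot depend on the cube $Q$; I may therefore work with $NQ_0$.

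Next, for $N$ a multiple of $m$, subadditivity against the tiling of $NQ_0$ by translates of $mQ_0$ gives
\[
\frac{\mu(\omega)(NQ_0)}{N^{n}}\le\frac{1}{N^{n}}\sum_{z}g_m(\tau_z\omega)=\frac{1}{m^{n}}\,A_N(\omega),
\]
where $g_m(\omega)=\mu(\omega)(mQ_0)\in L^1$ and $A_N$ is the average of $g_m$ over the $(N/m)^n$ lattice translates $z$ tiling $NQ_0$. The multiparameter (Wiener) pointwise ergodic theorem for $\tau_z$ sends $A_N\to\mathbb{E}[g_m\mid\mathcal{I}]$ a.e., with $\mathcal{I}$ the invariant $\sigma$-algebra, so $\overline{\phi}:=\limsup_t\mu(\omega)(tQ_0)/|tQ_0|\le m^{-n}\mathbb{E}[g_m\mid\mathcal{I}]$ a.e.; taking expectations and infimizing over $m$ gives $\mathbb{E}[\overline{\phi}]\le\phi_0$.

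The crux is the reverse estimate $\mathbb{E}[\underline{\phi}]\ge\phi_0$ for $\underline{\phi}:=\liminf_t\mu(\omega)(tQ_0)/|tQ_0|$. Tiling is of no help here, since subadditivity bounds a whole by its parts, i.e.\ in the wrong direction for a lower bound; this is exactly the hard half of the Akcoglu--Krengel superadditive ergodic theorem \cite{Akcog81} applied to $-\mu$. I would run its maximal/covering argument: were the density to undershoot $\phi_0-\varepsilon$ on a non-negligible event, a Vitali selection would produce an almost-disjoint family of subcubes on which the normalized value of $\mu$ stays below $\phi_0-\varepsilon$ and which covers all but a $\delta$-fraction of a large $NQ_0$; recombining this family through subadditivity and controlling the residual by $\mu\le C|\cdot|$ would force $\gamma(N)/N^n$ below $\phi_0$, contradicting $\phi_0=\inf_m\gamma(m)/m^n$. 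Hence $\mathbb{E}[\underline{\phi}]\ge\phi_0$. Since $\underline{\phi}\le\overline{\phi}$ pointwise while $\mathbb{E}[\overline{\phi}]\le\phi_0\le\mathbb{E}[\underline{\phi}]$, the nonnegative difference $\overline{\phi}-\underline{\phi}$ has zero mean, so the limit exists a.e.\ and equals a $\tau$-invariant $\phi$ with $\mathbb{E}[\phi]=\phi_0$; invariance holds because a shift of $\omega$ perturbs the numerator only on a boundary layer of vanishing relative volume, and under ergodicity the invariant $\phi$ is a.e.\ the constant $\phi_0$. I expect this covering and recombination step to be the main obstacle, as it is the one point that needs more than the linear pointwise ergodic theorem and Fekete's lemma.
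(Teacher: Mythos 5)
First, a framing point: the paper does not prove this theorem at all --- it is quoted from \cite{Akcog81} and \cite{DalMod86} and used as a black box --- so the benchmark for your attempt is the Akcoglu--Krengel superadditive ergodic theorem itself. Your overall architecture is exactly the standard one from those sources: Fekete's lemma for $\gamma(m)=\mathbb{E}[\mu(\cdot)(mQ_0)]$, the multiparameter pointwise ergodic theorem against a tiling for the upper bound, and a covering/filling argument for the lower bound. So the route is right; the problem is that the one step you defer is both the entire content of the theorem and, as you have phrased it, incorrect.

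The genuine gap is in your ``crux'' step, and it fails in two distinct ways. First, the contradiction scheme you describe (undershoot of $\phi_0-\varepsilon$ on a non-negligible event forces $\gamma(N)/N^n<\phi_0$) would, if it worked, prove $\underline{\phi}\geq\phi_0$ \emph{almost surely}. That conclusion is false in the non-ergodic case: the theorem's limit $\phi(\omega)$ satisfies only $\mathbb{E}[\phi]=\phi_0$ and can genuinely lie below $\phi_0$ on an invariant set of positive measure, compensated elsewhere. Second, even for the weaker claim $\mathbb{E}[\underline{\phi}]\geq\phi_0$, your recombination does not close: on the complement of the bad event you control $\mu$ only through the domination $\mu\leq C|\cdot|$, so the bookkeeping gives $\gamma(N)/N^n\leq P(B)\,(\phi_0-\varepsilon)+(1-P(B))\,C$, and since $C$ may well exceed $\phi_0$ this is no contradiction with $\gamma(N)/N^n\geq\phi_0$. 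The correct statement to aim for is the conditioned lower bound $\underline{\phi}\geq\inf_m m^{-n}\,\mathbb{E}[g_m\mid\mathcal{I}]$ a.e., established $\omega$-wise by the Akcoglu--Krengel maximal/filling lemma for superadditive processes --- precisely the machinery you gesture at but do not supply, and it cannot be replaced by an expectation-level argument. Two smaller points in the same vein: your averages $A_N$ are ergodic averages of the \emph{subaction} $(\tau_{mz})_{z\in\mathbb{Z}^n}$, whose invariant $\sigma$-algebra may be strictly larger than $\mathcal{I}$ (harmless after taking expectations, but it matters when you identify the a.e.\ limit and when you deduce constancy, since ergodicity of $\tau$ does not pass to $\tau_m$; the cited proofs average over the $m^n$ cosets to fix this); and your invariance argument for $\phi$ via a boundary layer uses monotonicity-type comparisons that subadditivity only furnishes in one direction, so it too needs the covering machinery rather than a direct subtraction.
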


 We refer a paper from Dal Maso-Modica \cite{DalMod86} for the use of ergodic theory in the calculus of variations, according to the following setting:
 
 Define the translation operator $\tau_{z}$ that acts through the following relations:\\ $\tau_{z}u(x)=u(x-z)$, and $\tau_{z}A=\{x\in \mathbb{R}^{n}:x-z\in A\}$, and the homothety operator $\displaystyle(\rho_{\varepsilon}J)(u,A)=\varepsilon^{n}J(\rho_{\varepsilon} u,\rho_{\varepsilon}A)$ where
 $(\rho_{\varepsilon}u)(x)=\frac{1}{\varepsilon}u(\varepsilon x)$, $\displaystyle(\rho_{\varepsilon}A)=\{x\in\mathbb{R}^{n}:\varepsilon x\in A\}$. A stochastic homogenization process is a family of random variables $(J_{\varepsilon})_{\varepsilon>0}$ on a probability space $(\Omega,F,P)$ that has the same distribution law with the random functionals given by 
 $\displaystyle[(\rho_{\varepsilon}J)(\omega)(u,A)]$ for $u\in W^{1,p}(A)$. This means
 
 \[ P\{\omega\in\Omega:F_{\varepsilon}(\omega)\in S\}=P\{\omega\in\Omega:\rho_{\varepsilon}J(\omega)\in S\}\] for any open set $S$.
 
 We say that the random functional $J$ is stochastically periodic, that is, $J$ has the same distribution law as the random functional $\displaystyle (\tau_{z}J)(\omega)(u,A)=J(\tau_{z}u,\tau_{z}A)$.
 
\begin{thm}
Let $\displaystyle J(\omega)(u,A)=\int_{A}f(x,\nabla u)dx$ , where $f$ satisfies standard growth conditions: $k|p|^{2}\leq f(x,p)\leq K|p|^{2} $ for some positive constants $k,K$. Denote the minimizer of $F$ by $\displaystyle m(J,u_{0},A)=\min_{u}\{J(u,A):u-u_{0}\in W_{0}^{1,2}(A)\}$.
\\ If $J$ is a random integral functional  and if $J$ and $\tau_{z}J=J(\tau_{z}u,\tau_{z}A)$ have the same distribution law, then the limit \[\lim_{t\rightarrow\infty}\frac{m(J(\omega),u_{0},Q_{t})}{|Q_{t}|}\] exists. If in addition $J$ is ergodic, the limit is constant.
\end{thm}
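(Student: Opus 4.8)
The plan is to recognize the set function $A \mapsto m(J(\omega),u_0,A)$ as a subadditive process that is periodic in law, and then to invoke the ergodic Theorem~1 verbatim. Throughout I take the boundary datum to be linear, $u_0(x)=\ell_p(x):=p\cdot x$, which is the case relevant to the homogenized integrand and is precisely what renders the process translation covariant. Three structural properties must then be checked: domination, subadditivity, and invariance of the law under lattice translations.

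For domination, the lower bound $f(x,p)\geq k|p|^2\geq 0$ forces $J(\omega)(u,A)\geq 0$ for every admissible $u$, so $m(J(\omega),\ell_p,A)\geq 0$; testing with $u=\ell_p$ (admissible since $\ell_p-\ell_p=0\in W_0^{1,2}(A)$) and using $f(x,p)\leq K|p|^2$ gives $m(J(\omega),\ell_p,A)\leq\int_A f(x,p)\,dx\leq K|p|^2|A|$, so $0\leq m\leq C|A|$ with $C=K|p|^2$. For subadditivity, let $(A_i)_{i\in I}$ be a finite disjoint family with $|A\setminus\bigcup_i A_i|=0$ and let $u_i$ realize (or nearly realize) $m(J(\omega),\ell_p,A_i)$, so $u_i-\ell_p\in W_0^{1,2}(A_i)$. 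Gluing by $u=u_i$ on each $A_i$ produces an admissible competitor on $A$: because every $u_i$ has trace $\ell_p$ on $\partial A_i$, the pieces match across the shared interfaces, whence $u\in W^{1,2}(A)$ and $u-\ell_p\in W_0^{1,2}(A)$. Since $J$ is an integral functional, $J(\omega)(u,A)=\sum_i J(\omega)(u_i,A_i)$, and therefore $m(J(\omega),\ell_p,A)\leq\sum_i m(J(\omega),\ell_p,A_i)$ (letting the approximation parameter tend to zero if near-minimizers are used), which is exactly subadditivity in the sense of Theorem~1.

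For invariance of the law I would use the change of variables $x=y+z$ for $z\in\mathbb{Z}^n$. Writing any competitor on $\tau_z Q$ as $u=\tau_z v+p\cdot z$ with $v-\ell_p\in W_0^{1,2}(Q)$, one checks $u=\ell_p$ on $\partial(\tau_z Q)$, and since the constant $p\cdot z$ does not change the gradient, $J(\omega)(u,\tau_z Q)=\int_{\tau_z Q}f(x,\nabla v(x-z))\,dx=\int_Q f(y+z,\nabla v(y))\,dy=(\tau_z J)(\omega)(v,Q)$. Minimizing over $v$ yields the covariance identity $m(J(\omega),\ell_p,\tau_z Q)=m((\tau_z J)(\omega),\ell_p,Q)$. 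As $J$ and $\tau_z J$ share the same distribution law by hypothesis, the processes $Q\mapsto m(J(\cdot),\ell_p,Q)$ and $Q\mapsto m(J(\cdot),\ell_p,\tau_z Q)$ have the same distribution; that is, the process is periodic in law. Theorem~1 then delivers the existence of $\lim_{t\to\infty}m(J(\omega),\ell_p,Q_t)/|Q_t|$ on a full-measure set and for every cube, with a constant limit when $J$ is ergodic.

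The main obstacle I anticipate is not any single one of these verifications but the underlying measurability: to apply Theorem~1 one must first know that $\omega\mapsto m(J(\omega),\ell_p,A)$ is genuinely a random variable, which requires a measurable selection of (near-)minimizers and is where the hypothesis that $J$ is a \emph{random} integral functional carries the real weight. I note that the usual difficulty of obtaining a single exceptional null set valid simultaneously for all cubes does not arise separately here, since Theorem~1 already furnishes one $\Omega'$ of full measure on which convergence holds for every cube $Q$; thus once the subadditive-process structure and the measurability of $m(J(\cdot),\ell_p,\cdot)$ are secured, the conclusion follows by direct appeal to \cite{DalMod86}.
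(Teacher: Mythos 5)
Your proposal is correct and follows essentially the same route as the paper, which proves the theorem in one line by observing that $\mu(\omega)(A)=m(J(\omega),l_{p},A)$ is dominated and subadditive and then invoking the ergodic theorem of \cite{Akcog81} (Theorem~1). Your write-up simply fills in the details the paper leaves implicit --- the domination bound via the competitor $u=\ell_{p}$, the gluing argument for subadditivity, the translation-covariance identity giving periodicity in law, and the measurability caveat --- all of which are the standard Dal Maso--Modica verifications the paper is tacitly relying on.
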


The proof of this theorem is based on the ergodic theorem \cite{Akcog81}, since the function $\mu(\omega)(A)=m(J(\omega),l_{p},A)$ is dominated and subadditive. 

Related works to stochastic homogenization include \cite{Bensou78},\cite{PapaVa}, \cite{Bourg94}. We also mention the work of Cafarelli-Mellet \cite{Caffa09} in which the authors extended the results of Cioranescu-Murat \cite{Cior82} in the case that the obstacle problem is considered in a domain perforated by balls of random radius centered at the $\mathbb{Z}^{n}$ lattice. As they showed, depending on the capacity of the holes, we still have an additional term that appears in the averaged equations. 

\section{Percolation theory and Random modelling}\label{S:Ds}
Percolation theory deals with the behavior of connected elements in random graphs. A cluster is a simply connected group of elements. Percolation phenomena arise in transport, porous media, spread of deseases, conductivity problems, sea ice etc. The theory was introduced by Broadpent and Hammersley in 1957, when they considered the problem of fluid flow through a porous medium formed by channels, keeping in mind that some of the channels may be blocked.  

In the discrete version of percolation, we consider the $\mathbb{Z}^{n}$ lattice and for $p\in[0,1]$ we connect the point $x\in\mathbb{Z}^{n}$ to each of its $2n$ nearest neighbors with probability $p$, independently of the other points.  We describe distances in the lattice in the following sense: two vertices $x,y\in\mathbb{Z}^{n}$ are neighbors if $|x-y|=1$. We can also define boundary $\partial Q_{n}$ of the sets $Q_{n}=[1,n]^{d}\cap\mathbb{Z}^{n}$, where $\partial Q_{n}=\{y\in Z^{d}:\exists x\in Q_{n}:|x-y|=1\}$. 

According to this setting, each pair of neighbours has an edge between them with probability $p$. The edge is also called a bond. A path is a finite or infinite alternating sequence $(z_{1},e_{1},z_{2},e_{2},..)$ of vertices $z_{i}$ and bonds $e_{i}$ such that $z_{i}\neq z_{j}$ and $e_{i}\neq e_{j}$ for $i\neq j$. Two vertices are connected if there is a finite open path from one to the other. An open cluster is a set of connected vertices that is maximal with respect to this property. It can be either finite or infinite.

 The main question that arises is if there exists a critical threshold of $p$ at which an infinite cluster occurs. In many cases, like the two-dimensional lattices, the critical value $p_{c}$ can be computed explicitly. This critical probability is an increasing function of $p$. 
 
\section{Continuum Percolation}

 A more general situation appears in the models of continuum percolation, where the integer lattice is replaced by a random set of points in $\mathbb{R}^{n}$. The random positions are usually formed by the realization of a point process. Point processes are important models and have been used in a variety of problems such as environmental modeling, air pollution, weather radar images, traffic networks, statistical mechanics etc. For a complete account of the theory, please see \cite{Mees96}.
 
 A point process is thought as a random set of points in the space. In particular we give the following definition:
 \begin{deff} Let $B$ be a Borel subset of $\mathbb{R}^{n}$ and denote by $N$ the set of all counting measures in $\mathbb{R}^{n}$. Let $\psi\in N$ be a counting measure, i.e. a measure which is 1 on each point $x\in B$. Then $N$ is identified as the set of all such configurations of points in $\mathbb{R}^{n}$ without limit points. According to this setting, $\psi(A)=$ random number of points in $A$, for any set $A\subset \mathbb{R}^{n}$. Then, a point process is defined as a measurable map $X$ from a probability space $(\Omega_{1}, F, P)$ into $(N,M,P)$, where $M$ is the correspoding $\sigma-$algebra.
 \end{deff}
 
 The periodicity of the lattice-type structures is replaced with the assumption that the point process $X(\cdot)$ is stationary:
\\ If $T_{\alpha}$ is the translation in $\mathbb{R}^{n}$ by a vector $\alpha$, $T_{\alpha}(\beta)=\beta+\alpha$, $\forall\beta\in \mathbb{R}^{n}$, then $T_{\alpha}$ induces a transformation $S_{\alpha}:N\rightarrow N$ through the operation $(S_{\alpha}\psi)(A)=\psi(T_{\alpha}^{-1}A)$ $\forall A\in B^{n}$ and similar operation holds for the set-measures.
\begin{deff} The point process $X$ is stationary if its distribution is $S_{\alpha}-$invariant for any $\alpha\in \mathbb{R}^{n}$.\end{deff}

Two common models in the theory of continuum percolation are the Boolean model and the random connection model. Both models are based on occurences of Poisson processes. From this point, we assume that the point process $X$ is a Poisson process of density $\lambda$:
\\(i) for any collection of mutually disjoint sets $A_{1},A_{2},...,A_{k}$,\\ the random variables $X(A_{1}),X(A_{2}),...,X(A_{k})$ are mutually independent, and
\\(ii) for any bounded set $A\subset B^{n}$ and any non-negative integer $k\geq 0$, $\displaystyle P(X(A)=k)=e^{-\lambda l(A)}\frac{\lambda^{k}(l(A))^{k}}{k!}$, where $l(\cdot)$ is the Lebesgue measure.
\subsection{The Boolean model}

The Boolean model is driven by a Poisson process $X$ and each point of the process is the center of a ball of random radius. The region in the space that is covered by at least one ball is called the occupied region and its complement is the vacant region. To be able to have shifting properties, we construct the models as follows. The Poisson process is defined in a probability space $\left(\Omega_{1},F_{1},P_{1}\right )$ and we consider a second space $\displaystyle\Omega_{2}=\prod_{\psi\in N}\prod_{z\in Z^{n}}[0,\infty)$ equipped with the probability measure $\mu$ on $[0,\infty)$. Setting $\Omega=\Omega_{1}\times\Omega_{2}$ with product measure $P=P_{1}\times P_{2}$, the Boolean model is defined as the map $(\omega_{1},\omega_{2})\rightarrow\left( X(\omega_{1}),\omega_{2}\right)$ from $\Omega$ into $N\times\Omega_{2}$. 

According to this construction, the radii of the balls are independent of the point process and we obtain the shifting properties that ergodic theory requires.

We use the notation of [13] and we denote by $(X,\rho,\lambda)$ the Boolean model obtained from a Poisson process $X$ of density $\lambda$ and radius random variable $\rho$.  

\subsection{The Random connection model}

As in Boolean models, the Poisson process is the first characteristic of the model and it assigns randomly points in the space. The second characteristic of the model is the connection function, which plays an essential role to the model and the homogenization process as we will see later. A connection function $g:\mathbb{R}^{+}\rightarrow [0,1]$ connects two points $x_{1}, x_{2}\in X$ with probability $g(|x_{1}-x_{2}|)$, where $|\cdot|$ denotes the Euclidean distance. Depending on the construction that we need, we can choose $g$ with specific characteristics. For example, we may assume that $g$ is decreasing with respect to the distance. Such models are defined in product spaces as before. We denote them by $\left(X,g,\lambda\right)$.
\begin{deff} Two points $x,y$ of the process are connected if there is a sequence of points $\{x_{0}=x,x_{1},x_{2},....,x_{n}=y\}$  such that each pair of points $x_{i},x_{i+1}$ are the endpoints of a line segment (edge) $\{x_{i},x_{i+1}\}$ for all $i=0,....,n-1$. As in the discrete percolation, a component is a set of points such that any two points of this set are connected and the set is maximal with respect to this property. \end{deff}

\subsection{Ergodic properties of point processes}

 The one dimensional ergodic theorem is stated as follows (see for instance \cite{Mees96}):

\begin{thm}
Let $(\Omega,F,\mu, T)$ be a measure preserving dynamical system and let $f$ be $\mu-$integrable function on $\Omega$. Then, $\displaystyle\frac{1}{n}\sum_{i=0}^{n-1}f(T^{i}(\omega))\rightarrow E(f|I)(\omega)$, as $n\rightarrow\infty$ a.s. where $I$ is the $\sigma-$algebra of $T-$ invariant sets.
\end{thm}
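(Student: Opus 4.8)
The plan is to establish the pointwise convergence by the classical route through a maximal ergodic inequality, following Garsia's argument. Write $A_{n}f=\frac{1}{n}\sum_{i=0}^{n-1}f\circ T^{i}$ for the ergodic averages and set $\bar{f}=\limsup_{n}A_{n}f$ and $\underline{f}=\liminf_{n}A_{n}f$. Since $A_{n}f\circ T-A_{n}f=\frac{1}{n}(f\circ T^{n}-f)\to 0$ pointwise, both $\bar{f}$ and $\underline{f}$ are $T$-invariant, hence $I$-measurable. The whole problem therefore reduces to two claims: first, that $\bar{f}=\underline{f}$ almost everywhere, so that a limit $f^{\ast}$ exists, and second, that $f^{\ast}=E(f|I)$.

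For the first claim I would prove the maximal ergodic theorem. Put $S_{0}=0$, $S_{k}=\sum_{i=0}^{k-1}f\circ T^{i}$, and $M_{N}=\max(0,S_{1},\dots,S_{N})\in L^{1}(\mu)$. The key algebraic identity is $f+M_{N}\circ T=\max(S_{1},\dots,S_{N+1})\geq\max(S_{1},\dots,S_{N})$, which on the set $\{M_{N}>0\}$ (where the maximum defining $M_{N}$ is attained away from $0$) gives $f\geq M_{N}-M_{N}\circ T$. Integrating over $\{M_{N}>0\}$, and using $\int M_{N}=\int_{\{M_{N}>0\}}M_{N}$ together with the measure-preservation of $T$ (so $\int M_{N}\circ T=\int M_{N}$ and $M_{N}\circ T\geq 0$), yields $\int_{\{M_{N}>0\}}f\,d\mu\geq 0$.

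Applying this inequality on the $T$-invariant set $E_{\alpha,\beta}=\{\underline{f}<\beta<\alpha<\bar{f}\}$ to the function $f-\alpha$ (restricting $T$ to $E_{\alpha,\beta}$ and letting $N\to\infty$, so that $\{M_{N}>0\}\uparrow E_{\alpha,\beta}$) shows $\int_{E_{\alpha,\beta}}f\geq\alpha\,\mu(E_{\alpha,\beta})$, and the symmetric argument applied to $\beta-f$ gives $\int_{E_{\alpha,\beta}}f\leq\beta\,\mu(E_{\alpha,\beta})$; since $\alpha>\beta$ this forces $\mu(E_{\alpha,\beta})=0$. Taking the countable union over rationals $\beta<\alpha$ gives $\bar{f}=\underline{f}$ a.e., defining $f^{\ast}$. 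To identify $f^{\ast}=E(f|I)$ it suffices to check $\int_{A}f^{\ast}\,d\mu=\int_{A}f\,d\mu$ for every $A\in I$; the invariance of $A$ gives $\int_{A}A_{n}f\,d\mu=\int_{A}f\,d\mu$ for all $n$, so the point is to pass the limit inside the integral. I expect this final passage to be the main technical obstacle: for bounded $f$ it is immediate from dominated convergence, but for a general $\mu$-integrable $f$ one must truncate $f=f\mathbf{1}_{\{|f|\leq R\}}+f\mathbf{1}_{\{|f|>R\}}$ and use the maximal inequality to deduce that the averages $A_{n}f$ are uniformly integrable, so that the contribution of the tail vanishes as $R\to\infty$. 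Finally, in the ergodic case the $\sigma$-algebra $I$ is trivial, hence $E(f|I)=\int_{\Omega}f\,d\mu$ is constant and the averages converge a.e. to the space average.
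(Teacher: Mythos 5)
This theorem is the classical Birkhoff pointwise ergodic theorem, which the paper states without proof, citing \cite{Mees96} as background for the ergodic properties of point processes; so there is no in-paper argument to compare against, and your proposal must stand on its own. It does: you have reproduced the standard Garsia route, and all the essential steps are right --- the identity $f+M_{N}\circ T=\max(S_{1},\dots,S_{N+1})$, the integration trick giving $\int_{\{M_{N}>0\}}f\,d\mu\geq 0$ via $\int M_{N}\circ T\,d\mu=\int M_{N}\,d\mu$ and $M_{N}\circ T\geq 0$, the invariant sets $E_{\alpha,\beta}$ with a countable union over rational pairs $\beta<\alpha$, and the identification of the limit through $\int_{A}A_{n}f\,d\mu=\int_{A}f\,d\mu$ for $A\in I$. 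Two details deserve more care than your sketch gives them. First, in the invariance step, the claim $\frac{1}{n}f\circ T^{n}\to 0$ a.e.\ is not automatic for unbounded $f\in L^{1}$; it needs a Borel--Cantelli argument, using $\mu(|f\circ T^{n}|>\epsilon n)=\mu(|f|>\epsilon n)$ by measure preservation and $\sum_{n}\mu(|f|>\epsilon n)\leq\|f\|_{1}/\epsilon<\infty$. Alternatively, the identity $A_{n+1}f=\frac{1}{n+1}f+\frac{n}{n+1}(A_{n}f)\circ T$ yields $\bar{f}\circ T=\bar{f}$ directly, with no such lemma. Second, at the final limit passage you invoke the maximal inequality to get uniform integrability of $\{A_{n}f\}$, but in fact nothing that deep is needed: with $f=g+h$, $g$ bounded and $\|h\|_{1}<\epsilon$, Chebyshev alone gives $\mu(|A_{n}f|>\lambda)\leq\|f\|_{1}/\lambda$, whence $\int_{\{|A_{n}f|>\lambda\}}|A_{n}f|\,d\mu\leq\|g\|_{\infty}\|f\|_{1}/\lambda+\|h\|_{1}$, since $\|A_{n}h\|_{1}\leq\|h\|_{1}$ by measure preservation; equivalently one can argue via Jensen that $\int\Phi(|A_{n}f|)\,d\mu\leq\int\Phi(|f|)\,d\mu$ for a de la Vall\'ee Poussin function $\Phi$. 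With those two gaps filled, your argument is a complete and correct proof, and your closing observation --- that ergodicity makes $I$ trivial so the limit is the constant $\int_{\Omega}f\,d\mu$ --- is exactly the form in which the paper uses the theorem.
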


For point processes, we identify  any element $\omega\in\Omega$ with a counting measure. Then the shift by distance $t$, $T_{t}$, induces a transformation through $(S_{t}\omega)(A)=\omega(T_{t}^{-1}A)$ for all measurable sets $A\in\mathbb{R}$.

\begin{deff}
A stationary point process is ergodic if the group $\{S_{x}:x\in\mathbb{R}^{n}\}$ acts ergodically on $(\Omega,F,\mu)$.
\end{deff}

An important aspect of continuum percolation models is that the ergodic properties of the process $X$ are carried over the two models. The following results are well known \cite{Mees96}:

\begin{thm}: A Poisson point process is stationary ergodic.\end{thm}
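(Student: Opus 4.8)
The plan is to establish the two assertions separately, deriving ergodicity from the stronger mixing property that the independence axiom (i) makes transparent. First I would prove stationarity. The law of $X$ on $(N,M)$ is determined by its finite-dimensional distributions, i.e. by the joint laws of $(X(A_{1}),\dots,X(A_{k}))$ for bounded Borel sets $A_{i}$. By axiom (ii) each $X(A_{i})$ is Poisson with mean $\lambda\, l(A_{i})$, and by axiom (i) the counts over disjoint sets are independent. Applying $S_{\alpha}$ replaces each $A_{i}$ by $T_{\alpha}^{-1}A_{i}$; since Lebesgue measure is translation invariant we have $l(T_{\alpha}^{-1}A_{i})=l(A_{i})$, and since $T_{\alpha}$ is a bijection it maps disjoint sets to disjoint sets. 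Hence the finite-dimensional distributions, and therefore the full law, are unchanged under $S_{\alpha}$ for every $\alpha\in\mathbb{R}^{n}$, which is precisely the stationarity in Definition~4.

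For ergodicity I would reduce the problem to mixing. If $B\in M$ is $S_{x}$-invariant, so that $S_{x}B=B$ for all $x$, then the mixing relation
\[
\lim_{|x|\to\infty}\mu(S_{x}A\cap B)=\mu(A)\mu(B)\qquad\text{for all }A,B\in M
\]
applied with $A=B$ gives $\mu(B)=\mu(B)^{2}$, forcing $\mu(B)\in\{0,1\}$. This is exactly the ergodicity of the action $\{S_{x}:x\in\mathbb{R}^{n}\}$ demanded by Definition~5, so it suffices to prove the displayed mixing identity.

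The heart of the argument, and where the Poisson structure enters, is to verify mixing first for cylinder (local) events, namely those in the algebra generated by sets of the form $\{\psi:\psi(C)=m\}$ with $C$ bounded. If $A$ depends only on the configuration inside a bounded region $K$, then $S_{x}A$ depends only on $T_{x}K$, which for $|x|$ large is disjoint from the bounded support of $B$. Axiom (i) then yields the \emph{exact} factorization $\mu(S_{x}A\cap B)=\mu(S_{x}A)\,\mu(B)=\mu(A)\,\mu(B)$, the last equality by the stationarity just proved. Thus mixing holds identically on the generating algebra once $|x|$ is sufficiently large.

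The main obstacle is the final approximation step, promoting mixing from cylinder events to all of $M$. I would argue that the algebra of cylinder events is dense in $L^{2}(N,M,\mu)$, so that arbitrary $A,B\in M$ may be replaced by close cylinder approximations with an error controlled uniformly in $x$ via the Cauchy--Schwarz inequality; passing to the limit through the already-established cylinder case then yields the displayed identity in full generality. The delicate point is precisely this density claim: one must know that $M$ is generated by the local $\sigma$-algebras and invoke a monotone-class argument to extend the factorization from the generating system to the whole $\sigma$-algebra. With that in hand, mixing holds for all $A,B\in M$, and the reduction above completes the proof that the Poisson process is stationary and ergodic.
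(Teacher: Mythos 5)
Your proof is correct, and it is essentially the standard argument: the paper itself offers no proof of this theorem, simply citing it as well known from the continuum percolation literature \cite{Mees96}, where exactly your route is taken --- stationarity from translation invariance of Lebesgue measure via finite-dimensional distributions, then ergodicity deduced from mixing, which is verified exactly on cylinder events (using the independence axiom once the shifted bounded region is disjoint from the other event's support) and extended to all of $M$ by the approximation/monotone-class argument you describe.
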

\begin{thm} Suppose that the point process $X$ is ergodic. Then the random connection model $(X,g,\lambda)$ and the Boolean model $(X,\rho,\lambda)$ are also ergodic.\end{thm}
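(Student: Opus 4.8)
The plan is to reduce both statements to a single claim: that the shift group $\{S_x : x\in\mathbb{R}^n\}$ acts ergodically on the product space $(\Omega,P)=(\Omega_1\times\Omega_2,\,P_1\times P_2)$ underlying the construction. The point is that both the occupied region of the Boolean model $(X,\rho,\lambda)$ and the connected graph of the random connection model $(X,g,\lambda)$ are measurable maps $\Phi$ of the enriched configuration $(\omega_1,\omega_2)$ that commute with translations, $\Phi\circ S_x = T_x\circ\Phi$, where $T_x$ is the geometric translation acting on the model. Since a measurable shift-equivariant image of an ergodic system is ergodic (its invariant $\sigma$-algebra embeds into the invariant $\sigma$-algebra of the source), it is enough to prove that $S_x$ is ergodic on $\Omega$. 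Here $\omega_2\in\Omega_2$ encodes the auxiliary randomness --- the radii for the Boolean model, the edge indicator variables for the connection model --- which by the product construction is independent of $X$ and i.i.d. from point to point, while $S_x$ acts on $\Omega_2$ by the induced re-indexing that carries each point's mark along with it.

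First I would fix a translation-invariant event $B\in F$, with $S_x^{-1}B=B$ for every $x$, and aim to show $P(B)\in\{0,1\}$ by conditioning on $\Omega_1$. Set $p(\omega_1)=P_2(\{\omega_2:(\omega_1,\omega_2)\in B\})$ for the conditional probability of the $\omega_1$-section. Two properties combine to force $p$ to be a.s. $0$ or $1$. On the one hand, because $B$ is invariant under \emph{all} spatial translations, for $P_1$-a.e. $\omega_1$ its section in $\Omega_2$ cannot depend on the auxiliary variables of any bounded region; invariance pushes its dependence entirely into the tail $\sigma$-field of the i.i.d. marks, so Kolmogorov's $0$--$1$ law gives $p(\omega_1)\in\{0,1\}$ for $P_1$-a.e. $\omega_1$. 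On the other hand, invariance of $B$ together with the independence of the marks from $X$ shows that $p$ is itself an $S_x$-invariant function on $\Omega_1$, i.e. $p(S_x\omega_1)=p(\omega_1)$ for all $x$.

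To finish I would invoke the ergodicity of the base process. Since $X$ is ergodic by hypothesis --- a hypothesis supplied by the previous theorem whenever $X$ is the Poisson process --- the invariant function $p$ is $P_1$-a.s. constant, and integrating gives the constant value $\int_{\Omega_1}p\,dP_1 = P(B)$. Comparing with $p\in\{0,1\}$ a.s. yields $P(B)\in\{0,1\}$, so $S_x$ acts ergodically on $\Omega$, and the reduction of the first paragraph then delivers ergodicity of both models.

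I expect the main obstacle to be the step asserting that a fully translation-invariant event has trivial sections in the mark coordinates, that is, that it lies in the tail field of the auxiliary randomness conditionally on $X$. For the Boolean model this is comparatively clean, since the marks are attached one per point and are i.i.d.; for the random connection model the auxiliary variables are indexed by pairs of points, so one must verify that the symmetric edge-assignment rule built into $\Omega_2=\prod_{\psi\in N}\prod_{z\in\mathbb{Z}^n}[0,\infty)$ still produces independence across disjoint regions and that the induced action of $S_x$ on these coordinates is measure preserving. Carrying out this bookkeeping carefully --- equivalently, checking directly that the auxiliary system is mixing so that its product with the ergodic base remains ergodic --- is the technical heart of the argument; the remaining measurability and equivariance verifications are routine and can be found in \cite{Mees96}.
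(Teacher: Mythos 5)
Your overall frame --- view both models as shift-equivariant measurable images of the enriched configuration space $(\Omega_1\times\Omega_2,P_1\times P_2)$, so that ergodicity of the skew shift on the product implies ergodicity of both models --- is sound, and it is the same frame as the reference the paper points to (the paper itself offers no proof of this theorem; it cites it as known from \cite{Mees96}). But the key step of your argument has a genuine gap. You claim that for a translation-invariant event $B$, the section $B_{\omega_1}=\{\omega_2:(\omega_1,\omega_2)\in B\}$ lies (for a.e.\ fixed $\omega_1$) in the tail $\sigma$-field of the marks, so that Kolmogorov's $0$--$1$ law gives $p(\omega_1)\in\{0,1\}$. This does not follow from invariance: the relation $S_x^{-1}B=B$ identifies $B_{\omega_1}$ with a section over the \emph{different} configuration $S_x\omega_1$ (after an $\omega_1$-dependent reindexing of marks), so no transformation acts on the single section $B_{\omega_1}$, and no tail-triviality can be extracted. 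Concretely, the invariant event ``every point's mark exceeds the distance to its nearest neighbour'' has sections that are sensitive to each individual mark, hence are manifestly not tail events; the conclusion $p\in\{0,1\}$ a.s.\ is in fact \emph{equivalent} to the theorem (since $p$ is a.s.\ constant by base ergodicity), so deriving it from a $0$--$1$ law that does not apply is circular. Your fallback --- ``the auxiliary system is mixing, and ergodic $\times$ mixing is ergodic'' --- also does not close the gap: that classical theorem concerns a \emph{direct} product of two measure-preserving actions, whereas here the shift permutes the mark coordinates in an $\omega_1$-dependent way, i.e.\ the system is a skew product over the base, and the marks carry no autonomous $\mathbb{R}^n$-action to which the theorem could be applied.

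The standard repair, and the route taken in \cite{Mees96}, avoids sections entirely: one works with the semi-algebra of cylinder events depending on bounded regions of space. For two such events $A$, $B$ with supports $K_A$, $K_B$, conditional on the point configuration the marks in disjoint regions are independent, so for $|x|$ large one gets
\begin{equation*}
P\bigl(A\cap S_x^{-1}B\bigr)=E_{P_1}\bigl[\,p_A(\omega_1)\,p_B(S_x\omega_1)\,\bigr],
\end{equation*}
where $p_A$, $p_B$ are the conditional probabilities given $\omega_1$. If $X$ is mixing (as the Poisson process is), the right side tends to $P(A)P(B)$ and both models are mixing, hence ergodic; if $X$ is merely ergodic, one instead averages over $x$ in large cubes and applies the mean ergodic theorem to conclude the Ces\`{a}ro version of the same limit, which characterizes ergodicity. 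A routine approximation argument then extends from cylinder events to all events. Your first paragraph (equivariance of $\Phi$ and triviality of the pulled-back invariant algebra) can be kept verbatim; it is the conditional $0$--$1$ law paragraph that must be replaced by this averaging argument.
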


Let $(\Omega_{1}, F_{1},P_{1})$ be a probability space associated with the Poisson process $X$ and let $\omega\in\Omega_{1}$ be a point configuration on $\mathbb{R}^{n}$ that is assumed to be locally finite and countable. This means that we have a finite number of points hits every compact set $K\subset\mathbb{R}^{n}$ almost surely: \[ P(\omega\in\Omega:\psi(K)<\infty\text{ for all compact }K\subset\mathbb{R}^{n})=1\]

\section{Construction of random domains for homogenization}
\subsection{Random connection models}
We start using the random connection model $(X,g,\lambda)$ in the following way: 
 
Suppose that $\omega$ is a given realization for $X$ which is locally finite and let $x_{i}\in X$ be a given point of this realization.

Consider the annulus $A=\{x\in\mathbb{R}^{n}: c_{1}\leq |x-x_{i}|\leq c_{2}\}$, where $c_{1},c_{2}$ are positive constants with $c_{1}\leq c_{2}$.  

 We want to connect the point $x_{i}$ with all the points in $A$ that are given from $X$. For this purpose we choose the connection function
 \[g(|x-x_{i}|)=
	\begin{cases}
	1 & \text{if } c_{1}\leq|x-x_{i}|\leq c_{2} \\
  0 & \text{otherwise} 
  \end{cases}\]
  
 For a point $x_{j}\in A$, we denote by $l_{ij}(\omega)=l(x_{i},x_{j})$ the line segment with endpoints $x_{i},x_{j}$ and let $T_{c_{1}/2}(l_{ij})(\omega)$ the tube of radius $c_{1}/2$ surrounding $l_{ij}$. Let now $\displaystyle T(x_{i})(\omega)=\cup_{j}T_{c_{1}/2}(l_{ij})(\omega)$ and $\displaystyle F(\omega,c_{1}/2)= \cup_{i}T(x_{i})(\omega)$ for all points $x_{i}$ of the process.   
 
 Thus, the set $F(\omega,c_{1}/2)$ is the union of random tubes obtained from the given realization of the point process. Let $G(\omega,c_{1}/2)=\mathbb{R}^{n}\setminus F(\omega,c_{1}/2)$.  We define the indicator function
\[a(\omega,x)=1-min\{X_{F(\omega, c_{1}/2)},1\}\] which is zero in the union of tubes and one elsewhere.  

Let $D$ be an open, bounded domain of $\mathbb{R}^{n}$ and consider the random functional  $\displaystyle J(\omega)(u,D)=\int_{D}a(\omega,x)|\nabla u|^{2}dx=\int_{G(\omega)\cap D}|\nabla u|^{2}dx$ for $u\in W^{1,2}(D)$. This functional is periodic in law and independent at large distances, thus ergodic. 

 Furthermore let $\displaystyle(\rho_{\varepsilon}J)(u,A)=\varepsilon^{n}J(\rho_{\varepsilon} u,\rho_{\varepsilon}A)$ where
 $\displaystyle(\rho_{\varepsilon}u)(x)=\frac{1}{\varepsilon}u(\varepsilon x)$, $\displaystyle(\rho_{\varepsilon}A)=\{x\in\mathbb{R}^{n}:\varepsilon x\in A\}$.
Then the family 
\begin{displaymath}
J^{\varepsilon}(u,D)=\rho_{\varepsilon}J(u,D)=\int_{\varepsilon^{n} G(\omega)\cap D}|\nabla u|^{2}dx
\end{displaymath}
satisfies the assumptions of theorem $2$. Note that the $\rho_{\varepsilon}-$ homothetic functional is the functional obtained if we scale by $\varepsilon$ the distance between the connected points of the set $F(\omega,c_{1}/2)$ that corresponds to the union of tubes $\varepsilon F=F(\varepsilon\omega,\varepsilon c_{1}/2)$, where $\varepsilon\omega$ maps to the point measure whose support is $\{\varepsilon x_{i}\}$ and $\{x_{i}\}$ is the support of $X(\omega)$. Note that the scaling properties of this model are the same (in terms of distribution) with the model that we have if we choose 
\[g_{\varepsilon}(|x-x_{i}|)=
	\begin{cases}
	1 & \text{if }c_{1}\varepsilon\leq |x-x_{i}|\leq c_{2}\varepsilon \\
  0 & \text{otherwise} \\
  \end{cases}\]
with density function $\lambda/\varepsilon$. Let us define $F^{\varepsilon}(\omega)=\varepsilon F=F(\varepsilon\omega,\varepsilon c_{1}/2)$ and $G^{\varepsilon}(\omega)=\varepsilon G(\omega)=\mathbb{R}^{n}\setminus F^{\varepsilon}(\omega)$.  According to this model, only points of $\varepsilon-$distance are connected.

\begin{rmk}
The radius of the tubes need not necessarily be constant. We may consider the tubes $T_{\rho_{\varepsilon}(\omega)}(l(x_{i},x_{j}))=\{x\in\Omega: d(x,l(x_{i},x_{j}))\leq\rho_{\varepsilon} (x,\omega)\}$, where $\rho_{\varepsilon}(x,\omega)$ is continuous function on x, stationary ergodic and for some positive constants $\rho_{1,\varepsilon},\rho_{2,\varepsilon}$, $\rho_{1,\varepsilon}\leq\rho_{\varepsilon} (x,\omega)\leq\rho_{2,\varepsilon}$. This is due to the fact that the product of two ergodic processes is also ergodic.
\end{rmk} 
\subsection{Boolean models}

We take again a probability space $(\Omega_{1}, F_{1},P_{1})$ associated with the Poisson process $X$ and let $\omega\in\Omega$ be a given point configuration on $\mathbb{R}^{n}$. We consider a second space $\displaystyle\Omega_{2}=\prod_{\psi\in N}\prod_{z\in Z^{n}}[0,\infty)$ equipped with the probability measure $\mu$ on $[0,\infty)$ for the sequence of independent, identically distributed random radii. Let us denote by $\bar{\omega}=(\omega,\bar{r})$ the realization of this Boolean model in $\Omega=\Omega_{1}\times\Omega_{2}$, where $\bar{r}=(r_{1},r_{2},...)$. 

Let $F(\omega)$ be the union of random spheres obtained from the given realization of the point process. Let $G(\omega)=\mathbb{R}^{n}\setminus F(\omega)$. We define the indicator function 
 
$\displaystyle a(\omega,x)=1-min\{X_{F(\omega)},1\}$ which is zero in the union of spheres and one elsewhere. 

Let $D$ be an open, bounded domain of $\mathbb{R}^{n}$ and consider the random functional  $\displaystyle J(\omega)(u,D)=\int_{D}a(\omega,x)|\nabla u|^{2}dx=\int_{G(\omega)\cap D}|\nabla u|^{2}dx$. This functional is periodic in law and independent at large distances, thus ergodic. 

 Then, the family $\rho_{\varepsilon}J(\omega)(u,D)=J^{\varepsilon}(u,D)$ satisfies the assumptions of theorem $2$.

A Boolean model gives, in general, a union of spheres which may intersect. One way to model non-intersecting spheres is to combine  the Boolean and the random connection model in the following way:
\\ Suppose the RCM $(X,g,\lambda)$ is applied on a bounded region of $\mathbb{R}^{n}$. We want to assign every endpoint of the line process as the center of a ball of random radius. For fixed $\varepsilon>0$ there is a set of points from the point process X. In our case, instead of constructing tubes, we let every point be the center of a ball with radius $\displaystyle\rho(\omega)\leq\min d(x_{i},x_{j})(\omega)$, where the minimum is taken over all the pairs of points $x$ of $X(\omega)$. Note that, without any affect to our proofs, we may assume that $\rho(\omega)$ is identically distributed random variable taking maximum value $\min d(x_{i},x_{j})(\omega)$. We consider for simplicity the first case. According to this construction, we obtain a domain randomly perforated with balls of radius and with positive minimal distance to each other. Let us define \[\displaystyle F^{\varepsilon}(\omega)=\bigcup_{i\geq 1}B(\varepsilon\rho(\omega),\varepsilon x_{i})\cap D\] and $G^{\varepsilon}(\omega)\cap D=D\setminus F^{\varepsilon}(\omega)$. Note that $\text{mes}F^{\varepsilon}(\omega)$ tends to zero as $\varepsilon\rightarrow 0$.

\section{The Dirichlet problem}
\subsection{Boolean models and domains with fine-grained boundary}

Let $\omega\in\Omega_{1}$ be such that $X(\omega)$ is locally finite.

We consider the Dirichlet problem of the form
\begin{equation}
\begin{array}{l}
	\Delta u^{\varepsilon}-\lambda u^{\varepsilon}=f , x\in G^{\varepsilon}(\omega)\cap D \\
  \displaystyle u^{\varepsilon}=0 , x\in\partial G^{\varepsilon}(\omega) \\
\end{array}
\label{eq:1}
\end{equation}

for $u^{\varepsilon}\in W^{1,2}(G^{\varepsilon}(\omega))$, $f\in L^{2}(D)$. Here, $G^{\varepsilon}(\omega)$ is the domain perforated by  non-intersecting balls. 

To define capacity characteristics for the massiveness of $B_{i}^{\varepsilon}(\omega)$, let us consider the quantity 

\[\text{cap}(B)=\inf_{v}\int_{\mathbb{R}^{n}}|\nabla v|^{2}dx\]
for closed, bounded sets $B$ in $\mathbb{R}^{n}$, over all functions $v\in C_{0}^{\infty}(\mathbb{R}^{n})$ taking value $1$ in $B$. We consider only the case $n\geq 3$ so that  $\text{cap}(B)$ defines the Newton capacity. Note that the capacity is invariant with respect to translations and rotations. In addition, if $B_{\varepsilon}$ is the $\varepsilon-$ homothetic contraction of $B$, $\text{cap}(B_{\varepsilon})=\varepsilon^{n-2}\text{cap}(B)$. 

We also note that, clearly, as $\varepsilon\rightarrow 0$, the diameter of the balls tend to zero. Now, note that the limits $\displaystyle\lim_{\varepsilon\rightarrow 0}\sum_{(D)}\text{cap}(B_{i}^{\varepsilon}(\omega))=\text{cap}(G^{\varepsilon}(\omega))=C$ exists  due to the ergodic properties of the model and theorem $1$. The sum is taken over all balls strongly contained in $D$.

We state the homogenization theorem for the Dirichlet problem related to Boolean models.
\begin{thm} 
The family of solutions $u^{\varepsilon}$ of the Dirichlet problem $(3)$ (extended by zero in $\displaystyle\bigcup_{i=1}^{n(\varepsilon)}B_{i}^{\varepsilon}(\omega)$) converges in $L^{2}(D)$ to the solution $u$ of the boundary value problem

\begin{equation}
\begin{array}{l}
	\Delta u-(\lambda+C)u=f , x\in D \\
   u=0 , x\in\partial D \\
\end{array}
\label{eq:2}
\end{equation}
\end{thm}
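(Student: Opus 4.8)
The plan is to use the oscillating test function (energy) method of Cioranescu--Murat, adapted to the random geometry through the ergodic theorem. I fix a realization $\omega\in\Omega_{1}$ with $X(\omega)$ locally finite, extend each $u^{\varepsilon}$ by zero inside the balls $B_{i}^{\varepsilon}(\omega)$, and regard it as an element of $W_{0}^{1,2}(D)$. First I would establish a priori estimates: writing the weak form of $(3)$ and testing against $u^{\varepsilon}$ itself gives $\int_{D}|\nabla u^{\varepsilon}|^{2}+\lambda\int_{D}|u^{\varepsilon}|^{2}\le\|f\|_{L^{2}}\|u^{\varepsilon}\|_{L^{2}}$, a bound uniform in $\varepsilon$ (the extension by zero does not change the Dirichlet norm since $u^{\varepsilon}$ vanishes on the holes). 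Hence, along a subsequence, $u^{\varepsilon}\rightharpoonup u$ weakly in $W_{0}^{1,2}(D)$ and, by Rellich's theorem, $u^{\varepsilon}\to u$ strongly in $L^{2}(D)$; this already yields the asserted mode of convergence, and it remains to identify $u$ as the solution of $(4)$.

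The decisive construction is the corrector. For each $\varepsilon$ I would build an oscillating test function $w^{\varepsilon}=w^{\varepsilon}(\omega,\cdot)\in W^{1,2}(D)$ with three properties: $w^{\varepsilon}=0$ on every ball $B_{i}^{\varepsilon}(\omega)$; $w^{\varepsilon}\rightharpoonup 1$ weakly in $W^{1,2}$ with $w^{\varepsilon}\to 1$ in $L^{2}$; and $-\Delta w^{\varepsilon}=\mu^{\varepsilon}$ with $\mu^{\varepsilon}\rightharpoonup C\,dx$ in $W^{-1,2}(D)$, where $C$ is the capacity density of the model. Concretely $w^{\varepsilon}$ is assembled from the capacity potentials of the individual balls, each equal to $1$ on $B_{i}^{\varepsilon}$ and harmonic in the surrounding annulus of outer radius comparable to the mesoscopic scale $h$ with $\varepsilon\ll h\ll\mathrm{diam}\,D$, using the explicit fundamental solution of the Laplacian on the annulus. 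The positive minimal separation of the balls, built into the model via $\rho(\omega)\le\min d(x_{i},x_{j})(\omega)$, guarantees that the individual potentials have essentially disjoint supports, so that the assembled $w^{\varepsilon}$ is well controlled.

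I would then pass to the limit. Taking $\varphi\in C_{c}^{\infty}(D)$, I insert $w^{\varepsilon}\varphi$ as a test function in the weak form of $(3)$ and $u^{\varepsilon}\varphi$ in the weak form of $-\Delta w^{\varepsilon}=\mu^{\varepsilon}$, then subtract. Because both $u^{\varepsilon}$ and $w^{\varepsilon}$ vanish on the holes the boundary contributions drop out, and the symmetric cross term $\int_{D}\nabla u^{\varepsilon}\cdot\nabla w^{\varepsilon}\,\varphi$ cancels; using the strong $L^{2}$ convergence of $u^{\varepsilon}$, the weak convergence $w^{\varepsilon}\rightharpoonup 1$, and $\mu^{\varepsilon}\rightharpoonup C\,dx$, what survives is
\[
\int_{D}\nabla u\cdot\nabla\varphi\,dx+\lambda\int_{D}u\varphi\,dx+C\int_{D}u\varphi\,dx=-\int_{D}f\varphi\,dx,
\]
which is exactly the weak form of $(4)$. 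The extra term $C\int_{D}u\varphi$ is the strange term, produced by the pairing $\langle\mu^{\varepsilon},u^{\varepsilon}\varphi\rangle\to C\int_{D}u\varphi$.

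The main obstacle, and the place where the random model is genuinely used, is establishing $\mu^{\varepsilon}\rightharpoonup C\,dx$ with a \emph{deterministic} constant $C$ in the non-periodic geometry. In the periodic case one reads off $C$ cell by cell; here the centers $\varepsilon x_{i}$ come from a Poisson process, so I would localize on the mesoscopic scale $h$, build the corrector cube by cube, and invoke the ergodicity of the Boolean model (Theorem 5, via the ergodicity of the Poisson process in Theorem 4) together with the subadditive ergodic theorem (Theorem 1) to replace the random spatial average of the ball capacities by its deterministic mean. This is precisely the content of the assertion $\lim_{\varepsilon\to 0}\sum_{(D)}\mathrm{cap}(B_{i}^{\varepsilon}(\omega))=C$ almost surely, so that $C$ is the constant capacity density entering $(4)$. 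Finally, since $(4)$ has a unique weak solution, the limit is independent of the subsequence and the whole family $u^{\varepsilon}$ converges, which completes the proof.
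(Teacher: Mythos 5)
Your overall strategy is legitimate, but it is \emph{not} the paper's: the paper gives no in-house proof of this theorem at all. It verifies that the model satisfies the hypotheses needed (ergodicity of the Boolean model via Theorems 4--5, and the almost sure existence of the capacity limit $C=\lim_{\varepsilon\to 0}\sum_{(D)}\mathrm{cap}(B_{i}^{\varepsilon}(\omega))$ via the subadditive ergodic theorem) and then cites Khruslov for the convergence proof; the method behind that citation, carried out in detail in the paper's proof of Theorem 7 for the RCM case, is variational -- a mesoscopic partition of unity at scale $h$, local capacity minimizers $v^{\alpha}$ on $h$-cubes, and matching $\limsup$/$\liminf$ inequalities for the energy functional $\Gamma^{\varepsilon}$. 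You instead run the Cioranescu--Murat duality argument in the weak formulation. Your a priori bound, the subsequence extraction, the use of uniqueness of the limit problem to upgrade to full-family convergence, and the appeal to ergodicity to make $C$ deterministic are all sound and consistent with the paper's setup.

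There is, however, a concrete flaw in your corrector construction. You build each local potential equal to $1$ on $B_{i}^{\varepsilon}$ and harmonic in an annulus of outer radius ``comparable to the mesoscopic scale $h$'' with $\varepsilon\ll h$. But in this model the ball centers $\varepsilon x_{i}$ have minimal (and typical) separation of order $\varepsilon$, so an annulus of outer radius $\sim h$ around one ball contains on the order of $(h/\varepsilon)^{n}$ other balls: the assembled $w^{\varepsilon}$ then fails to vanish on the holes, the potentials are nowhere near having disjoint supports (contradicting your own appeal to the minimal-separation property), and the representation $-\Delta w^{\varepsilon}=\mu^{\varepsilon}$ with controlled $\mu^{\varepsilon}$ collapses. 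The standard repair, as in the periodic Cioranescu--Murat construction, is to truncate each capacitary potential at the \emph{separation} scale (outer radius a fraction of the minimal inter-center distance, hence of order $\varepsilon$); the mesoscopic scale $h$ should enter only afterwards, when you average $\sum\mathrm{cap}(B_{i}^{\varepsilon})$ over $h$-cubes and invoke the ergodic theorem to identify the deterministic density $C$. A second, lesser gap: in the limit passage you evaluate $\langle\mu^{\varepsilon},u^{\varepsilon}\varphi\rangle$ where both factors converge only weakly ($\mu^{\varepsilon}\rightharpoonup C\,dx$ and $u^{\varepsilon}\varphi\rightharpoonup u\varphi$ in $W_{0}^{1,2}(D)$); a weak-weak pairing does not pass to the limit in general, so you must either prove strong $W^{-1,2}$ convergence of $\mu^{\varepsilon}$ (which the correctly scaled annulus construction delivers) or invoke the Cioranescu--Murat lemma for nonnegative measures bounded in $W^{-1,2}(D)$. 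Both defects are repairable, but as written the central construction does not work.
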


The proof of this theorem can be found in \cite{Khrus05} under general assumptions which have been proved in our cases. 

\subsection{RCM and connected domains}
Let $F^{\varepsilon}(\omega)$ be the random set of channels that we constructed in section $4.2$. 
We assume that the Poisson process $X$ is locally finite in the sense that a finite number of points hits every compact set $K\subset\mathbb{R}^{n}$ almost surely. That is, $\displaystyle P(\omega\in\Omega:\psi(K)<\infty\text{ for all compact }K\subset\mathbb{R}^{n})=1$.

Let $G^{\varepsilon}(\omega)=D\setminus F^{\varepsilon}(\omega)$. We want to show that for sufficiently small $\varepsilon$, the volume of $G^{\varepsilon}(\omega)$ is strictly positive with probability $1$. That is, there is $\hat{\varepsilon}(\omega)>0$ such that
\begin{displaymath}  
P(\omega\in\Omega:|G^{\varepsilon}(\omega)|>0\text{ for all }\varepsilon<\hat{\varepsilon}(\omega))=1
\end{displaymath}

For simplicity and without loss of generality, suppose that $n=2$ and $D$ is a square of size $1/\varepsilon>0$. Consider a partition of $D$ into squares $D_{i}$, $i=1,..,1/\varepsilon^{2}$ of size $1$. Suppose that the point process $X$ is applied in $D$. Then, the probabilty of having zero points in a given $D_{i}$ is $e^{-\lambda}>0$. According to the law of large numbers, if $1_{i}(\omega)$ is the indicator function of the empty square $D_{i}$ of the partition,
\[P(\omega\in\Omega:\lim_{\varepsilon\rightarrow\infty}\frac{1}{1/\varepsilon^{2}}\sum_{i=1}^{\varepsilon^{2}}1_{i}(\omega)=e^{-\lambda})=1\]
which means that, in the limit, there are empty squares with probability one, as needed.
Since the points, within an $\varepsilon-$range, are connected, the set \[K^{\varepsilon}(\omega)=\bigcup_{\{i,j\}}T_{\rho_{\varepsilon}}(l_{ij})\bigcap\{\bigcup_{\{k,l\}\neq\{i,j\}} T_{\rho_{\varepsilon}}(l_{kl})\}\] (the intersection of the tubes) is non-empty, $K^{\varepsilon}(\omega)\subset F^{\varepsilon}(\omega)$.
 Clearly, as $\varepsilon\rightarrow 0$, $\text{mes}F^{\varepsilon}(\omega)\rightarrow 0$ and $G^{\varepsilon}(\omega)$ becomes denser in $\Omega$. One may consider two different homogenization problems: the first is when the elliptic equation is defined in $G^{\varepsilon}(\omega)$ for $u^{\varepsilon}\in W^{1,2}(G^{\varepsilon}(\omega))$ and the second is the problem of decreasing volume when $u^{\varepsilon}\in W^{1,2}(F^{\varepsilon}(\omega))$.

We consider the Dirichlet problem of the form
\begin{equation}
\begin{array}{l}
	 -\Delta u^{\varepsilon}+\lambda u^{\varepsilon}=f , x\in G^{\varepsilon}(\omega) \\
   u^{\varepsilon}=0 , x\in\partial G^{\varepsilon}(\omega) \\
\end{array}
\label{eq:3}
\end{equation}
for $u^{\varepsilon}\in W^{1,2}(G^{\varepsilon}(\omega))$, $f\in L^{2}(D)$ and as $\varepsilon\rightarrow 0$, $G^{\varepsilon}(\omega)$ is approximately $D$. In general, $F^{\varepsilon}(\omega)$ consists of connected components, but it is not necessarily a connected set.

Standard elliptic theory gives the existence of solutions. We extend the functions $u^{\varepsilon}$ by zero in $F^{\varepsilon}(\omega)$ and we keep the same notation for the extended sequence of functions. We denote by $Q_{h}^{x}$ the n-cube centered at $x$ of length $h$, $\text{diam}D>>h>>\varepsilon$, and we define the local capacity functional
\begin{equation}
\begin{array}{l}
\displaystyle\text{cap}(x,h,\varepsilon,\omega)=\inf_{u^{\varepsilon}}\int_{Q_{h}^{x}}|\nabla u^{\varepsilon}(y)|^{2}dy
\end{array}
\label{eq:4}
\end{equation} over all $u^{\varepsilon}\in W^{1,2}(Q_{h}^{x}):u^{\varepsilon}=0$ in $F^{\varepsilon}(\omega)$. Clearly, if $F^{\varepsilon_{1}}(\omega)\cap Q_{h}^{x}\subset F^{\varepsilon_{2}}(\omega)\cap Q_{h}^{x}$, then $\text{cap}(x,h,\varepsilon_{1},\omega)\leq\text{cap}(x,h,\varepsilon_{2},\omega)$. Thus, the capacity functional measures the massiveness of $F^{\varepsilon}(\omega)$ in $\Omega$. Note that, from our construction and due to theorem $(5)$, $F^{\varepsilon}(\omega)$ and its complement are periodic in law and ergodic in the sense that disjoint cubes have the same distribution.

Note that $\displaystyle\text{cap}(x,h,\varepsilon,\omega)=\inf_{u^{\varepsilon}}\int_{Q_{h}^{x}}a(x,\omega)|\nabla v^{\varepsilon}|^{2}dx$, where \[a(x,\omega)=1-\min\{1,X_{T_{\rho}(l_{ij})}\}\] which is $0$ in the union of random channels with endpoints the points of the Poisson process and $1$ elsewhere.

Hence, theorem $2$ is applicable over the class of functions in $W^{1,2}(D)$. Thus, the constant limit 
\[
 c=\lim_{h\rightarrow 0}\lim_{\varepsilon\rightarrow 0}\frac{\displaystyle \text{cap}(x,h,\varepsilon,\omega)}{h^{n}}
\]
exists almost all $\omega\in\Omega_{1}$. Hence, we can also assume that
\[
\limsup_{\varepsilon\rightarrow 0}\frac{\text{cap}(x,h,\varepsilon,\omega)}{h^{n}}< A\]
for all $x\in D$ with $A$ independent of $h$. Our main homogenization theorem is the following:
\begin{thm}
Let $F^{\varepsilon}(\omega)$ be the sequence of RCM domains constructed in section $5$ and $G^{\varepsilon}(\omega)$ be its complement set.
Let $u^{\varepsilon}\in W^{1,2}(G^{\varepsilon}(\omega))$ be the family of solutions of the boundary value problems $(6.3)$ extended by zero in $F^{\varepsilon}(\omega)$. Then, as $\varepsilon\rightarrow 0$, $u^{\varepsilon}$ converges in $L^{2}(D)$ to the limit $u\in W^{1,2}(D)$ which solves the boundary value problem
 \begin{equation}
\begin{array}{l}
	\Delta u-(\lambda+c)u=f , x\in D \\
  u=0 ,  x\in\partial D 
\end{array}
\label{eq:5}
\end{equation}
\end{thm}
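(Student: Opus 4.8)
The plan is to follow the oscillating test-function (energy) method of Cioranescu--Murat, recast in the mesoscopic formulation of Khruslov, with the capacity density $c$ supplied by the ergodic Theorem~2. The first step is the a priori estimate: testing the weak formulation of $(6.3)$ with $u^{\varepsilon}$ itself and using $\lambda>0$ gives
\[
\int_{D}|\nabla u^{\varepsilon}|^{2}\,dx+\lambda\int_{D}|u^{\varepsilon}|^{2}\,dx=\int_{D}fu^{\varepsilon}\,dx,
\]
where the integrals extend over all of $D$ because $u^{\varepsilon}$ and $\nabla u^{\varepsilon}$ vanish on $F^{\varepsilon}(\omega)$ after the zero extension. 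By Cauchy--Schwarz this yields a bound $\|u^{\varepsilon}\|_{W^{1,2}(D)}\le C\|f\|_{L^{2}(D)}$ that is uniform in $\varepsilon$. Hence along a subsequence $u^{\varepsilon}\rightharpoonup u$ weakly in $W^{1,2}(D)$ and, by Rellich--Kondrachov, $u^{\varepsilon}\to u$ strongly in $L^{2}(D)$; since each $u^{\varepsilon}$ vanishes on $\partial D$ we get $u\in W_{0}^{1,2}(D)$, recovering the boundary condition.

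The core of the argument is the construction of correctors $w^{\varepsilon}$ adapted to the random channels. Using the local capacity functional $(6.4)$ on the mesoscopic cubes $Q_{h}^{x}$ with $\varepsilon\ll h\ll\operatorname{diam}D$, I would take $w^{\varepsilon}$ to be a suitable gluing of the capacitary potentials, so that $0\le w^{\varepsilon}\le 1$, $w^{\varepsilon}=0$ on $F^{\varepsilon}(\omega)$, and $w^{\varepsilon}\rightharpoonup 1$ weakly in $W^{1,2}(D)$ (equivalently $w^{\varepsilon}\to 1$ strongly in $L^{2}$ and $\nabla w^{\varepsilon}\rightharpoonup 0$ weakly in $L^{2}$), while $-\Delta w^{\varepsilon}=\mu^{\varepsilon}$ with $\mu^{\varepsilon}\rightharpoonup c\,dx$ in the sense of measures. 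The constant $c=\lim_{h\to 0}\lim_{\varepsilon\to 0}\operatorname{cap}(x,h,\varepsilon,\omega)/h^{n}$ is exactly the limit already furnished for a.e.\ $\omega$ by applying the subadditive ergodic Theorem~2 to the dominated subadditive functional $\operatorname{cap}(x,h,\varepsilon,\omega)$, whose stationarity and ergodicity come from Theorem~5; the uniform bound $A$ guarantees that the $\mu^{\varepsilon}$ are bounded and tight.

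With the correctors in hand I would test the weak form of $(6.3)$ with $\phi w^{\varepsilon}$ and the corrector identity with $\phi u^{\varepsilon}$, for $\phi\in C_{0}^{\infty}(D)$; both are admissible, since $w^{\varepsilon}$ and $u^{\varepsilon}$ vanish on $F^{\varepsilon}(\omega)$. Expanding $\nabla(\phi w^{\varepsilon})$ and $\nabla(\phi u^{\varepsilon})$ and subtracting cancels the awkward term $\int_{D}\phi\,\nabla u^{\varepsilon}\cdot\nabla w^{\varepsilon}$, leaving
\[
\int_{D}w^{\varepsilon}\nabla u^{\varepsilon}\cdot\nabla\phi-\int_{D}u^{\varepsilon}\nabla w^{\varepsilon}\cdot\nabla\phi+\lambda\int_{D}u^{\varepsilon}\phi w^{\varepsilon}=\int_{D}f\phi w^{\varepsilon}-\langle\mu^{\varepsilon},\phi u^{\varepsilon}\rangle.
\]
Passing to the limit term by term, using $w^{\varepsilon}\to 1$ and $u^{\varepsilon}\to u$ strongly in $L^{2}$, $\nabla u^{\varepsilon}\rightharpoonup\nabla u$ and $\nabla w^{\varepsilon}\rightharpoonup 0$ weakly in $L^{2}$, and $\mu^{\varepsilon}\rightharpoonup c\,dx$, I obtain
\[
\int_{D}\nabla u\cdot\nabla\phi+(\lambda+c)\int_{D}u\phi=\int_{D}f\phi,
\]
which is precisely the weak formulation of the homogenized problem $(6.5)$. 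Because this limit problem has a unique solution, the whole family $u^{\varepsilon}$, and not merely a subsequence, converges to $u$.

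The hard part will be the corrector construction together with the passage $\langle\mu^{\varepsilon},\phi u^{\varepsilon}\rangle\to c\int_{D}\phi u$: one must build $w^{\varepsilon}$ from the mesoscopic capacity minimizers so that the induced measures $\mu^{\varepsilon}$ genuinely converge to the \emph{constant} density $c$, control the iterated limit $\varepsilon\to 0$ followed by $h\to 0$ inherent in the definition of $c$, and ensure the pairing against $\phi u^{\varepsilon}$ survives the limit. It is here that the ergodicity of the random connection model (Theorem~5), the existence of the constant capacity density, and the uniform bound $A$ are indispensable; everything else is the standard energy bookkeeping above.
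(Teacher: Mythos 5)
Your skeleton is the Cioranescu--Murat oscillating test-function method, and that is a genuinely different route from the paper's. The paper follows Khruslov's variational scheme: it regards the extended solution $u^{\varepsilon}$ as the minimizer of $\Gamma^{\varepsilon}[v]=\int_{G^{\varepsilon}(\omega)\cap D}|\nabla v|^{2}+\lambda|v|^{2}+2fv\,dx$ and proves two $\Gamma$-limit-type inequalities: the upper bound by gluing the mesoscopic capacity minimizers $v^{\alpha}$ with a partition of unity, $w_{h}^{\varepsilon}=\sum_{\alpha}w\,v^{\alpha}\phi_{\alpha}$, and estimating the remainder terms $L_{1},\dots,L_{5}$ via $(6.7)$--$(6.9)$ and the capacity asymptotics $(6.11)$; the lower bound by comparing $u_{\delta}^{\varepsilon}/u_{\delta}$ against the local capacity functional on cubes, $(6.12)$--$(6.14)$. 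The conclusion is that the $L^{2}$-limit $u$ minimizes $\bar{\Gamma}$. Your a priori estimate and compactness step coincide with the paper's opening (the paper gets the same bound from $\Gamma^{\varepsilon}[u^{\varepsilon}]\leq\Gamma^{\varepsilon}[0]=0$ plus Friedrich's inequality). If your route were completed it would buy something real: the subtraction identity delivers the weak form of the limit equation in one stroke, with no separate liminf step, and your appeal to uniqueness of the limit problem gives whole-sequence convergence, a point the paper leaves implicit. The price is that the entire difficulty is relocated into the corrector construction, which you do not carry out.

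That relocation is where the genuine gap sits, and one of your stated hypotheses would in fact fail as written. If $w^{\varepsilon}=0$ on $F^{\varepsilon}(\omega)$ and $w^{\varepsilon}\rightharpoonup 1$ in $W^{1,2}(D)$, then $-\Delta w^{\varepsilon}$ cannot be a single measure converging weakly-$*$ to $c\,dx$: it must carry a compensating singular part of the opposite sign concentrated on (the boundary of) the channels. The correct Cioranescu--Murat framework is $-\Delta w^{\varepsilon}=\mu^{\varepsilon}-\gamma^{\varepsilon}$ with $\gamma^{\varepsilon}$ supported on $\overline{F^{\varepsilon}(\omega)}$, harmless only because it is paired against $\phi u^{\varepsilon}$, which vanishes there. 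Moreover, weak-$*$ convergence of $\mu^{\varepsilon}$ in the sense of measures is not enough to pass to the limit in $\langle\mu^{\varepsilon},\phi u^{\varepsilon}\rangle$ when $u^{\varepsilon}$ converges only strongly in $L^{2}$ and weakly in $W^{1,2}$; one needs strong convergence of $\mu^{\varepsilon}$ in $H^{-1}(D)$ (or an equivalent equi-integrability argument). Establishing these properties for correctors built from the cube-wise capacity minimizers of $(6.4)$ --- including a diagonal choice handling the iterated limit $\varepsilon\to 0$ then $h\to 0$ hidden in the definition of $c$, and the identification of the limit density with the \emph{constant} $c$ via the ergodicity of the random connection model --- is precisely the work the paper performs in its estimates $(6.7)$--$(6.11)$ and Step 2. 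You have flagged this as ``the hard part,'' correctly, but flagging it is not supplying it: as it stands the proposal is a sound reduction of the theorem to a corrector lemma, with the lemma --- the load-bearing step --- missing and its statement needing the amendment above before it could even be true.
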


\begin{proof}
Note that the (extended by zero) solution $u^{\varepsilon}$ of $(6.3)$ is the minimizer in $D$ of the functional $\displaystyle \Gamma^{\varepsilon}[u^{\varepsilon}]=\int_{G^{\varepsilon}(\omega)\cap D}|\nabla u^{\varepsilon}|^{2}+\lambda|u^{\varepsilon}|^{2}+2fu^{\varepsilon}dx=\int_{D}|\nabla u^{\varepsilon}|^{2}+\lambda |u^{\varepsilon}|^{2}+2fu^{\varepsilon}dx$ over the class of functions $u^{\varepsilon}\in W^{1,2}(G^{\varepsilon}(\omega))$.
\\ Thus, $\Gamma^{\varepsilon}[u^{\varepsilon}]\leq\Gamma^{\varepsilon}[0]=0$ which implies that 
\[\int_{G^{\varepsilon}(\omega)\cap D}|\nabla u^{\varepsilon}|^{2}+\lambda|u^{\varepsilon}|^{2}dx\leq 2\|u^{\varepsilon}\|_{L^{2}(\Omega)}\|f\|_{L^{2}(\Omega)}\]

 Using the Friedrich's inequality $\displaystyle\|u^{\varepsilon}\|_{L^{2}(D)}\leq C\|\nabla u^{\varepsilon}\|_{L^{2}(D)}$ we obtain that \[\|u^{\varepsilon}\|_{W^{1,2}(D)}\leq C\] where $C$ is independent of $\varepsilon$. Since $u^{\varepsilon}$ in bounded, it has a subsequence, still denoted by $u^{\varepsilon}$, that converges weakly in $W^{1,2}(D)$ and strongly in $L^{2}(D)$ to some function $u\in W^{1,2}(D)$. 
\\ To prove the theorem, it is enough to show that the limit $u$ is the minimizer of the functional $\displaystyle \bar{\Gamma}[u]=\int_{D}|\nabla u|^{2}+(\lambda+c)|u|^{2}+2fudx$.
\\\\ Step1: We first establish the inequality $\displaystyle\limsup_{\varepsilon\rightarrow 0}\Gamma^{\varepsilon}[u^{\varepsilon}]\leq \tilde{\Gamma}[w]$ for all $w\in W^{1,2}(D)$.
\\ For this purpose, we consider a partition of $D$ with cubes $Q^{\alpha}=Q(x^{\alpha},h)$ centered at $x^{\alpha}$ of size $h$, so that $\displaystyle\cup_{\alpha}Q(x^{\alpha},h)$ is a cover of $D$ and the points $x^{\alpha}$ form a periodic lattice of period $h-r$, $r$ to be chosen. A partition of unity $\{\phi_{\alpha}\}$ of $C^{2}$ functions subordinated to this covering is given as follows
\begin{enumerate}
\item $0\leq\phi_{\alpha}\leq 1$
\item $\phi_{\alpha}=0$ if $x\notin Q^{\alpha}$, $\phi_{\alpha}=1$ if $\displaystyle x\in Q^{\alpha}\setminus\cup_{\beta\neq\alpha}Q^{\beta}$
\item $\displaystyle\sum_{\alpha}\phi_{\alpha}(x)=1$, if $x\in D$
\item $|\nabla\phi_{\alpha}|\leq C/r$
\end{enumerate}
Let us denote by $v^{\alpha}=v^{\alpha(\varepsilon)}$ the minimizer of $\text{cap}(x,h,\varepsilon,\omega)$ in the cube centered at $x^{\alpha}$.
For $w\in C^{2}(D)$, compactly supported in $D$, define
\begin{equation}
\begin{array}{l}
 \displaystyle w_{h}^{\varepsilon}(x)=\sum_{\alpha=1}^{n(h)}w(x)v^{\alpha}(x)\phi_{\alpha}(x)=w(x)+\sum_{\alpha=1}^{n(h)}w(x)[v^{\alpha}(x)-1]\phi_{\alpha}(x)
  \end{array}
\label{eq:6}
\end{equation}
  so that $w_{h}^{\varepsilon}\in W^{1,2}(D)$ and $w_{h}^{\varepsilon}(x)=0$ in $F^{\varepsilon}(\omega)$. Thus, $\Gamma^{\varepsilon}[u^{\varepsilon}]\leq \Gamma^{\varepsilon}[w_{h}^{\varepsilon}]$. Under our assumptions, 
 \begin{equation}
\begin{array}{l}
\displaystyle\int_{Q_{h}^{\alpha}}|\nabla v^{\alpha}|^{2}dx\leq Ch^{n}
 \end{array}
\label{eq:7}
\end{equation}

We denote by $\displaystyle\hat{Q}_{h}^{\alpha}=Q_{h}^{\alpha}\setminus\cup_{\beta\neq\alpha}Q_{h}^{\beta} $ the concentric cube centered at $x^{\alpha}$ of size $\hat{h}=h-2r$.
Then, 
\begin{align*}&\int_{Q_{h}^{\alpha}\setminus\hat{Q}_{h}^{\alpha}}|\nabla v^{\alpha}|^{2}dx\\
&=\int_{Q_{h}^{\alpha}}|\nabla v^{\alpha}|^{2}dx-\int_{\hat{Q}_{h}^{\alpha}}|\nabla v^{\alpha}|^{2}dx+O(rh^{n-1})\\
&\leq\text{cap}(x,h,\varepsilon,\omega)-\text{cap}(x,\hat{h},\varepsilon,\omega)+O(rh^{n-1})\end{align*}.
\\ Choosing $r=h^{1+\gamma/2}=o(h)$, where $0<\gamma<2$ is a positive parameter, we obtain  
\begin{equation}
\begin{array}{l}
\displaystyle\int_{Q_{h}^{\alpha}\setminus\hat{Q}_{h}^{\alpha}}|\nabla v^{\alpha}|^{2}dx=o(h^{n})
 \end{array}
\label{eq:8}
\end{equation}

Note that Friedrich's inequality gives 
\begin{equation}
\begin{array}{l}
\displaystyle\int_{Q_{h}^{\alpha}\setminus\hat{Q}_{h}^{\alpha}}|v^{\alpha}-1 |^{2}dx\leq C(r)\int_{Q_{h}^{\alpha}\setminus\hat{Q}_{h}^{\alpha}}|\nabla v^{\alpha}|^{2}dx=o(h^{n+1})
 \end{array}
\label{eq:9}
\end{equation}

Differentiating $(5.6)$, we have
\begin{equation}
\begin{array}{l}
\displaystyle\frac{\partial w_{h}^{\varepsilon}}{\partial x_{i}}=\frac{\partial w}{\partial x_{i}}+\sum_{\alpha}\frac{\partial w}{\partial x_{i}}(v^{\alpha}-1)\phi_{\alpha}+\sum_{\alpha}\frac{\partial v^{\alpha}}{\partial x_{i}}w\phi_{\alpha}+\sum_{\alpha}\frac{\partial \phi_{\alpha}}{\partial x_{i}}(v^{\alpha}-1)w
 \end{array}
\label{eq:10}
\end{equation}
We substitute $(6.10)$ into $\Gamma^{\varepsilon}[w_{h}^{\varepsilon}]$ to obtain
\begin{align*}\Gamma^{\varepsilon}[w_{h}^{\varepsilon}]&=\int_{G^{\varepsilon}(\omega)}|\nabla w_{h}^{\varepsilon}|^{2}+\lambda |w_{h}^{\varepsilon}
|^{2}+2fw_{h}^{\varepsilon}dx
=\int_{G^{\varepsilon}(\omega)}|\nabla w|^{2}+
\lambda v|w|^{2}+\\&+2fwdx+\sum_{\alpha=1}^{n(h)}\int_{Q_{h}^{\alpha}}|\nabla v^{\alpha}|^{2}w^{2}\phi_{\alpha}^{2}dx+\sum_{i=1}^{5}L_{i}(\varepsilon,h).
\end{align*}
where
\[L_{1}(\varepsilon,h)=\sum_{\alpha}^{N(h)}2\int_{Q_{h}^{\alpha}}(f+\lambda w)(v^{\alpha}-1)w\phi_{\alpha}dx\]
\begin{align*}L_{2}(\varepsilon,h)&=\\&\sum_{\alpha,\beta}^{N(h)}2\int_{Q_{h}^{\alpha}\cap Q_{h}^{\beta}}\{\sum_{i=1}^{n}\left(\frac{\partial w}{\partial x_{i}}\phi_{\alpha}+\frac{\partial\phi_{\alpha}}{\partial x_{i}}w\right)\left(\frac{\partial w}{\partial x_{i}}\phi_{\beta}+\frac{\partial\phi_{\beta}}{\partial x_{i}}w\right)\\&+\lambda w^{2}\phi_{\alpha}\phi_{\beta}\}(v^{\alpha}-1)(v^{\beta}-1)dx\end{align*}
\[L_{3}(\varepsilon,h)=\sum_{\alpha,\beta}^{N(h)}2\int_{Q_{h}^{\alpha}\cap Q_{h}^{\beta}}\sum_{i=1}^{n}\left(\frac{\partial w}{\partial x_{i}}\phi_{\alpha}+\frac{\partial\phi_{\alpha}}{\partial x_{i}}w\right)\left(\frac{\partial w}{\partial x_{i}}\phi_{\beta}+\frac{\partial\phi_{\beta}}{\partial x_{i}}w\right)(v^{\beta}-1)dx\]
\[L_{4}(\varepsilon,h)=\sum_{\alpha,\beta}^{N(h)}\sum_{i=1}^{n} 2\int_{Q_{h}^{\alpha}\cap Q_{h}^{\beta}}\frac{\partial v^{\alpha}_{h}}{\partial x_{i}}\frac{\partial v^{\beta}_{h}}{\partial x_{i}}\phi_{\alpha}\phi_{\beta}w^{2}dx\]
\[L_{5}(\varepsilon,h)=\sum_{\alpha,\beta}^{N(h)}\sum_{i=1}^{n} 2\int_{Q_{h}^{\alpha}}\frac{\partial w}{\partial x_{i}}w\phi_{\alpha}\frac{\partial (v^{\alpha}_{h}-1)}{\partial x_{i}}dx\]

Taking into account the properties of $\phi_{\alpha}$, $(6.7)-(6.9)$ and the fact that the number of cubes $Q_{h}^{\beta}$ which intersect $Q_{h}^{\alpha}$ is no more than $3^{n}$, we have
\[\lim_{h\rightarrow 0}\lim_{\varepsilon\rightarrow 0}\sum_{i=1}^{5}L_{i}(\varepsilon,h)=0\]
From the properties of smooth functions $\phi_{\alpha}$ we now have
\[\int_{Q_{h}^{\alpha}}w^{2}\phi_{\alpha}|\nabla v^{\alpha}|^{2}dx\leq C\bar{w}_{\alpha}^{2}\text{cap}(x^{\alpha},h,\varepsilon,\omega)\] where $\bar{w}_{\alpha}$ is the mean of $w_{\alpha}$ over $Q_{h}^{\alpha}$. Summing over the cubes and letting $\varepsilon$ tend to zero we have
\begin{equation}
\begin{array}{l}
\limsup_{\varepsilon\rightarrow 0}\sum_{\alpha}\int_{Q_{h}^{\alpha}}|\nabla v^{\alpha}|^{2}w^{2}\phi_{\alpha}^{2}dx\leq\int_{D}cw^{2}dx+O(h)
 \end{array}
\label{eq:11}
\end{equation}
Combine these inequalities to see that $\displaystyle\limsup_{\varepsilon\rightarrow 0} \Gamma^{\varepsilon}[u^{\varepsilon}]\leq\bar{\Gamma}[w]$ for all twice differentiable functions with compact support in $D$. Using a density argument, this inequality holds for all $w\in H_{0}^{1}(D)$.

Step 2: To show the reverse inequality $\displaystyle\liminf_{\varepsilon\rightarrow 0}\Gamma^{\varepsilon}[u^{\varepsilon}]\geq\bar{\Gamma}[u]$, pick a sequence $u_{\delta}(x)\in C_{0}^{1}(D)$ such that $\displaystyle\|u_{\delta}-u\|_{H_{0}^{1}(D)}\leq\varepsilon$, where $u$ is the weak limit of the sequence of minimizers $u^{\varepsilon}$ of $\Gamma^{\varepsilon}[\cdot]$ in $H_{0}^{1}(D)$.
\\ According to lemma $3.2$ on \cite{Khrus05} pg.73, there is a sequence $\{u_{\delta}^{\varepsilon}\}\in H_{0}^{1}(D,F^{\varepsilon}(\omega))=\{v\in H_{0}^{1}(D):v=0\text{ in }F^{\varepsilon}(\omega)\}$ that converges to $u_{\delta}$ and satisfies $\displaystyle\|u_{\delta}^{\varepsilon}-u^{\varepsilon}\|_{H^{1}(D)}\leq C\|u_{\delta}-u\|_{H^{1}(D)}$.

Take now the cubes $Q_{h}^{\alpha}$ that belong to the set $D_{\tilde{\delta}}=\{x\in D:|u_{\delta}(x)|\geq\tilde{\delta}\}$ for positive parameter $\tilde{\delta}$ and in each of these cubes define the function $\displaystyle v_{\alpha}^{\varepsilon}=\frac{u_{\delta}^{\varepsilon}}{u^{\varepsilon}}$ so that $\displaystyle v_{\alpha}^{\varepsilon}\rightarrow 1$ weakly in $L^{2}(Q_{h}^{\alpha})$. 

Clearly,
\begin{equation}
\begin{array}{l}
\int_{Q_{h}^{\alpha}}|\nabla v_{\alpha}^{\varepsilon}|^{2}+h^{-2-\gamma}|v_{\alpha}^{\varepsilon}-1|^{2}dx\geq\text{cap}(x^{\alpha},\varepsilon,h,\gamma,\omega)
 \end{array}
\label{eq:12}
\end{equation}
and
\begin{equation}
\begin{array}{l}
 \frac{\partial v_{\alpha}^{\varepsilon}}{\partial x_{i}}=\frac{1}{u_{\delta}}\frac{\partial u_{\delta}^{\varepsilon}}{\partial x_{i}}-\frac{1}{u_{\delta}}\frac{\partial u_{\delta}}{\partial x_{i}}-\frac{u_{\delta}^{\varepsilon}-u_{\delta}}{u_{\delta}^{2}}\frac{\partial u_{\delta}}{\partial x_{i}}
  \end{array}
\label{eq:13}
\end{equation}
Using the expansion $(6.13)$ in $(6.12)$ and taking into account that $u_{\delta}^{\varepsilon}\rightarrow u_{\delta}$ strongly, we get
\begin{equation}
\begin{array}{l}
 \int_{Q_{h}^{\alpha}}\left|\nabla u_{\delta}^{\varepsilon}\right|^{2}dx\geq\text{cap}(x^{\alpha},\varepsilon,h,\omega)[\min_{Q_{h}^{\alpha}}|u_{\delta}|]^{2}+\frac{[\min_{Q_{h}^{\alpha}}|u_{\delta}|]^{2}}{[\max_{Q_{h}^{\alpha}}|u_{\delta}|]^{2}}\int_{Q_{h}^{\alpha}}|\nabla u_{\delta}|^{2}dx-G(\varepsilon,\delta,\tilde{\delta},h)
  \end{array}
\label{eq:14}
\end{equation}
where $\displaystyle\lim_{\varepsilon\rightarrow 0}G(\varepsilon,\delta,\tilde{\delta},h)=0$ for fixed $h,\delta,\tilde{\delta}$.

Finally, we sum over all cubes that intersect $G^{\varepsilon}(\omega)$, 
\begin{align*}
\Gamma^{\varepsilon}[u_{\delta}^{\varepsilon}]
\geq\sum_{\alpha=1}^{N}&\int_{Q_{h}^{\alpha}}|\nabla u_{\delta}|^{2}+\sum_{\alpha=1}^{N}\frac{\text{cap}(x^{\alpha},\varepsilon,h,\omega)}{h^{n}}[\sup_{Q_{h}^{\alpha}}|u_{\delta}|]^{2}h^{n}+\int_{D}\lambda|u_{\delta}^{\varepsilon}|^{2}+2fu_{\delta}^{\varepsilon}\\&  -\sum_{i=1}^{n}\frac{[\sup_{Q_{h}^{\alpha}}|u_{\delta}|]^{2}-[\min_{Q_{h}^{\alpha}}|u_{\delta}|]^{2}}{[\sup_{Q_{h}^{\alpha}}|u_{\delta}|]^{2}}\int_{Q_{h}^{\alpha}}|\nabla u_{\varepsilon}|^{2} dx-NG(\varepsilon,\delta,\tilde{\delta},h)
\end{align*}

We let $h\rightarrow 0$ for fixed $\delta$ to obtain
\[\lim_{\varepsilon\rightarrow 0}\Gamma^{\varepsilon}[u_{\delta}^{\varepsilon}]\geq\int_{D_{\tilde{\delta}}}|\nabla u_{\delta}|^{2}+cu_{\delta}^{2}dx+\int_{D}\lambda u_{\delta}^{2}+2fu_{\delta}^{2}dx\].

Let now $\delta,\tilde{\delta},\varepsilon$ tend to zero: $\displaystyle\bar{\Gamma}[u]\leq\bar{\Gamma}[v]$ for all $v\in H_{0}^{1}(D)$.
\end{proof}
\section{The Neumann problem, strong and weak connectivity}
In \cite{Khrus05}, the Neumann problem for elliptic equations is considered. The conditions of convergence are formulated in terms of mean local characteristics, that we will describe. 

Let $D\subset\mathbb{R}^{n}$ be a bounded domain and $G^{\varepsilon}(\omega)\cap D$ be the domain constructed in section $5$. We seek the compactness of the sequence $u^{\varepsilon}(x)$ defined in $G^{\varepsilon}(\omega)\cap D$, i.e, the possibility that a subsequence $u^{\varepsilon_{k}}(x)$ converging to some $u\in L^{2}(D)$ in the sense that \[\|u^{\varepsilon_{k}}-u\|_{L^{2}(G^{\varepsilon}(\omega)\cap D)}\rightarrow 0\] as $\varepsilon\rightarrow 0$.

The domains $\{G^{\varepsilon}(\omega)\}_{\varepsilon>0}$ are said to be strongly connected if any sequence of function $u^{\varepsilon}$ defined in $G^{\varepsilon}(\omega)$ is compact with respect to the last convergence. 

The main local characteristic of the medium is the functional
	\[ P_{\varepsilon,h}^{z}(\xi)=\inf_{v^{\varepsilon}\in W^{1,2}(Q_{h}^{z}\cap F^{\varepsilon}(\omega))}\int \{|\nabla v^{\varepsilon}|^{2}+h^{-2-\gamma}|v^{\varepsilon}-(x-z,\xi)|^{2}\}dx\]
\\ Note that the function $v_{\xi}^{\varepsilon}$ which minimizes $\displaystyle P_{\varepsilon,h}^{z}(\xi)$ for any $\xi\in \mathbb{R}^{n}$ can be written in the form $\displaystyle v_{\xi}^{\varepsilon}=\sum_{i=1}^{n}\xi_{i}v_{i}^{\varepsilon}$, where $v_{i}^{\varepsilon}$ is the corresponding minimizer for $\xi_{i}=e_{i}$.
\\ Thus, if we write
\begin{align*} a_{ij}(z,\varepsilon,h,\omega)=&\int\nabla v_{i}^{\varepsilon}\cdot\nabla v_{j}^{\varepsilon}
\\ &+h^{-2-\gamma}[v_{i}^{\varepsilon}-(x_{i}-\xi_{i})][v_{j}^{\varepsilon}-(x_{j}-\xi_{j})]dx
\end{align*}
then $P_{\varepsilon,h}^{z}(\xi)$ takes the quadratic form $\displaystyle P_{\varepsilon,h}^{z}(\xi)=\sum_{i,j=1}^{n}a_{ij}(z,\varepsilon,h,\omega)\xi_{i}\xi_{j}$.
\\ The matrix $[a_{ij}]$ is the local mean conductivity tensor of the medium at the point $z$.

Similar capacity-type functionals can be defined for weakly connected domains. Such domains consist of a finite number of strongly connected components. We note that the domains $G^{\varepsilon}(\omega)$, $\varepsilon>0$ are weakly connected domains, since the connectivity function $g(|\cdot|)$ gives, in general, only connected components.

Furthermore, the temperature distribution $u^{\varepsilon}$ of the porous medium is the minimizer of \[ J_{G^{\varepsilon}(\omega)}[u^{\varepsilon}]=\int_{G^{\varepsilon}(\omega)}|\nabla u^{\varepsilon}|^{2}dx\] over the class $\displaystyle\{ u^{\varepsilon}\in W^{1,2}(G^{\varepsilon}(\omega)):u^{\varepsilon}=u_{0}\text{ on }\partial(D)\}$.
\\ Suppose for the moment that $u^{\varepsilon}$ can be extended to $\tilde{u}^{\varepsilon}$ such that \\ $\displaystyle ||\tilde{u}^{\varepsilon}||_{W^{1,2}(D)}\leq C$ uniformly with respect to $\varepsilon$ (see \cite{BoSu07} for improvements). Then, up to a subsequence $\tilde{u}^{\varepsilon}$ converges to a function $u$ in $L^{2}(D)$, which is almost linear to every sufficiently small cube $K_{h}^{z}$, i.e. $u(x)=u(z)+(x-z,\nabla u(z))+o(h^{2})$. Then for $\varepsilon$ small enough,
\[\int_{Q_{h}^{z}\cap G^{\varepsilon}(\omega)}|v^{\varepsilon}(x)-(x-z,\nabla u(z))|^{2}dx=O(h^{n+4})\]
where $v^{\varepsilon}(x)=u^{\varepsilon}(x)-u(z)$. Thus, we can assume that the function $v^{\varepsilon}(x)$ is the minimizer of \[ J_{h}^{\varepsilon}(v^{\varepsilon})=\int_{Q_{h}^{z}\cap G^{\varepsilon}(\omega)}|\nabla v^{\varepsilon}|^{2}dx=O(h^{n})\] 
Under this consideration, \[J_{h}^{\varepsilon}(v^{\varepsilon})- P_{\varepsilon,h}^{z}(\nabla u(z))=o(h^{n})\] as $h\rightarrow 0$. 

The construction of $G^{\varepsilon}(\omega)$ says that $J_{h}^{\varepsilon}$ is periodic in law and independent at large distances in the following sense:

Define the function $\displaystyle a(\omega,z)=1-\min\{1,X_{T_{\rho}(l_{ij})}\}$ which is $0$ in the union of random channels with endpoints the points of the Poisson process and $1$ elsewhere. Consider the random functional $\displaystyle J(\omega)(u,A)=\int_{A}a(\omega,x)|\nabla u|^{2}dx=\int_{G(\omega)}|\nabla u|^{2}dx$ and define $\rho_{\varepsilon}J=J^{\varepsilon}(\omega)$.

Thus, theorem $2$ shows that $\displaystyle\lim_{h\rightarrow 0}\frac{v^{\varepsilon}}{|Q_{h}^{z}|}$ exists. This implies that \[\lim_{h\rightarrow 0}\lim_{\varepsilon\rightarrow 0}\frac{a_{ij}(z,\varepsilon,h,\omega)}{h^{3}}=a_{ij}(x)=\bar{a}\] exists almost all $\omega\in\Omega_{1}$ and it is constant.

It is worth to mention that, in order to apply the same homogenization technique, a major open problem is to prove the extension of functions from randomly perforated domains for Neumann problems, since the strong connectivity assumption of \cite{Khrus05} does not hold with probability one.

\section{Homogenization for sets of decreasing volume}

In \cite{Khrus05}, \cite{Pankr03} the homogenization problem for sets of decreasing volume is considered. In such problems, we allow the measure of the domains of definition of functions to vanish. A main assumption to define appropriate type of convergence is a uniform density of the vanishing set within the domain $D$: If $F^{\varepsilon}\subset D$ is such that $|F^{\varepsilon}|\rightarrow 0$ as $\varepsilon\rightarrow 0$, then for sufficiently small $\varepsilon>0$ we require
\[Cr^{n}|F^{\varepsilon}|\leq|B(x,r)\cap F^{\varepsilon}|\leq C^{-1}r^{n}|F^{\varepsilon}|\]
for every ball $B(x,r)$ centered at $x\in D$ of radius $r>0$. 

In particular, let us consider the set $F^{\varepsilon}(\omega)$ of tubes as in section $5$, so that $|F^{\varepsilon}(\omega)|\rightarrow 0$ as $\varepsilon\rightarrow 0$. We want to show that the density assumption holds for these domains with probability $1$. Recall that $F^{\varepsilon}(\omega)=\varepsilon F(\omega)$ is the union of tubes with endpoints the points of $X$ which has the same distribution with the model of density $\lambda/\varepsilon$ and height at least $c_{1}\varepsilon$ and at most $c_{2}\varepsilon$. Thus, as $\varepsilon$ decreases, the mean density increases and the distance between the pairs of connected points decreases.

Suppose that the last density condition does not hold. This means that there is a ball $B(x,r)$ such that for all $\varepsilon<\bar{\varepsilon}(\omega)$ and $\omega\in\Omega_{2}\subset\Omega_{1}$ with $|\Omega_{2}|\neq 0$, 
\[|B(x,r)\cap F^{\varepsilon}(\omega)|=0\]
This means that, for all small $\varepsilon$, there are no points $x_{i},x_{j}\in B(x,r)$ of the process $X$ with $c_{1}\varepsilon\leq|x_{i}-x_{j}|\leq c_{2}\varepsilon$. On the other hand,
\[P(X(B(x,r))=0)=e^{-\lambda r^{n}/\varepsilon}\rightarrow 0\] as $\varepsilon\rightarrow 0$ which is a contradiction.

\section{Further remarks}
The modeling based on connectivity functions may be used for more general shapes and random structures, since the total measure of the sets that it produces is forced to tend to zero.

\bibliographystyle{plain}
\bibliography{hcp1}

\end{document}